 \renewcommand{\div}{\mathop{\mathrm{div}}\nolimits}
\newtheorem*{thm*}{Theorem A}
\newtheorem{thm}{Theorem}[section]
\newtheorem{dfn}{Definition}[section]
\newtheorem{lemma}{Lemma}[section]
\newtheorem{remark}{Remark}
\begin{document}

\def\IR{{\mathbb{R}}}

\title{Rigidity results for stable solutions of symmetric systems}
\maketitle

\begin{center}
\author{Mostafa Fazly\footnote{The author is pleased to acknowledge the support of University of Alberta Start-up Grant RES0019810 and National Sciences and Engineering Research Council of Canada (NSERC) Discovery Grant RES0020463.}
}
\\
{\it\small Department of Mathematical and Statistical Sciences, 632 CAB, University of Alberta}\\
{\it\small Edmonton, Alberta, Canada T6G 2G1}\\
{\it\small e-mail: fazly@ualberta.ca}\vspace{1mm}
\end{center}

\begin{abstract}  We study stable solutions of the following nonlinear system  $$  -\Delta u    =   H(u)  \quad  \text{in} \ \  \Omega$$ where $u:\IR^n\to \IR^m$,  $H:\IR^m\to \IR^m$ and $\Omega$ is a domain in $\mathbb R^n$.  We introduce  the novel notion of symmetric systems. The above system is said to be symmetric if the matrix of gradient of all components of $H$ is symmetric. It seems that this concept is crucial to prove Liouville theorems, when $\Omega=\mathbb R^n$, and regularity results, when $\Omega=B_1$, for stable solutions of the above system for a general  nonlinearity $H \in C^1(\mathbb R ^m)$.    Moreover, we provide an improvement for a linear Liouville theorem given in \cite{fg} that is a key tool to establish De Giorgi type results in lower dimensions for elliptic equations and systems. 
\end{abstract}

\noindent
{\it \footnotesize 2010 Mathematics Subject Classification}. {\scriptsize  35J60, 35B35, 35B32,  35D10, 35J20}\\
{\it \footnotesize Keywords: Elliptic systems,  Liouville theorems, stable solutions, radial solutions,  regularity theory}. {\scriptsize }

\section{Introduction} 

We examine the following semilinear elliptic system of equations 
 \begin{equation} \label{main}
   -\Delta u    =   H(u)  \quad  \text{in} \ \  \mathbb{R}^n 
  \end{equation}   
  where $u:\IR^n\to \IR^m$ and $H:\IR^m\to \IR^m$.     We use the notation $u=(u_1,\cdots,u_m)$,  $H(u)=(H_1(u),\cdots,H_m(u))$ and  $\partial_j H_i(u)=\frac{\partial H_i(u)}{\partial {u_j}} $ where $\partial_i H_j (u)\partial_j H_i (u)\ge 0$ for $1\le i<j\le m$.         We are interested in the qualitative properties of radial stable solutions of (\ref{main}) when $H\in C^1(\mathbb R^m)$ is a general function.    Here is the notion of stability.      
\begin{dfn} \label{dfnstable}
A solution $u=(u_i)_{i}$ of (\ref{main}) is said to be stable when there is positive solution $\zeta=(\zeta_i)_i $ for the following linearized system  
\begin{equation} \label{sta}
 -\Delta \zeta_i =   \sum_{j=1}^{n} \partial_j H_i(u) \zeta_j  \quad  \text{in}  \ \  \mathbb{R}^n,
 \end{equation} 
 for all $ i=1,\cdots,m$. 
\end{dfn} 

For the case of $m=1$, equation (\ref{main}) turns into the scalar equation that is studied extensively in the literature.    As it is shown by Dupaigne and Farina in \cite{df},  any classical bounded stable solution of (\ref{main}) is constant provided $1 \le n \le 4$ and $0\le H \in C^1(\mathbb R)$ is a general nonlinearity. For particular nonlinearities $H(u)=e^u$ and $H(u)=u^p$ where $p>1$ the differential equation (\ref{main}) is called Gelfand and Lane-Emden equations, respectively,  and optimal Liouville theorems are provided by Farina in  \cite{f1,f2}.     In addition, when $H(u)=-u^{-p}$ for $p>0$ the equation (\ref{main}) is known as the Lane-Emden equation with negative exponent nonlinearity and optimal Liouville theorems are given by Esposito-Ghoussoub-Guo in \cite{egg1,egg2}.  Critical dimensions for Liouville theorems are 
\begin{itemize}
\item  $1\le n <10$, 
\item  $1\le n < 2+ \frac{4}{p-1}(p+\sqrt{p(p-1)})$ 
\item  $1\le n < 2+ \frac{4}{p+1}(p+\sqrt{p(p+1)})$ 
\end{itemize}
for the equations of Gelfand, Lane-Emden and Lane-Emden with negative exponent nonlinearity, respectively. Note that these dimensions are much higher than the fourth dimensions that is known for the case of general nonlinearity.    Let us mention that various equations with a singular nonlinear term in the case of singular equations have been studied in the book of Ghergu and Radulescu \cite{gr}. 

For radial solutions,  it is proved by Cabr\'{e}-Capella \cite{cc,CC} and Villegas \cite{sv} that any bounded radial stable solution of (\ref{main})  has to be constant provided $1\le n < 10$ when $H\in C^1(\mathbb R)$ is a general nonlinearity.  This is an optimal Liouville theorem. 

Note that for the case of systems, that is when $m \ge 1$, a counterpart of the Dupaigne-Farina's Liouville theorem is given by Ghoussoub and the author  in \cite{fg} for gradient systems that is where  there exists a $\mathcal H: \mathbb R^m\to \mathbb R$ such that $H=\nabla \mathcal H$.  It is in fact shown that if all components of $H$ are nonnegative, then any bounded stable solution of the system (\ref{main}) is necessarily constant provided $1\le n\le 4$.        

 In this paper, regarding Liouville theorems, we first provide an extension for a linear Liouville theorem that is proved by Ghoussoub and the author in \cite{fg}.  This Liouville theorem is a key tool in proving De Giorgi type results in lower dimensions for both systems and scalar equations.    Then we apply this Liouville theorem to establish a Liouville theorem  in lower dimensions $1\le n\le 4$  for elliptic symmetric systems (\ref{main})  that is when  the matrix of gradient of all components of $H$ is symmetric.   In addition,     for radial stable solutions,     we prove Liouville theorems and pointwise estimates for  elliptic system (\ref{main}) where $H=(H_i)_i$ for $H_i \in C^1(\mathbb R^m)$, $1\le i \le m$,  is a general nonlinearity. As in the scalar case, the critical dimension is $n=10$.

Roughly speaking, there is a correspondence between the regularity of stable solutions on bounded domains and the Liouville theorems for stable solutions on $\mathbb R^n$, via rescaling and a blow up procedure.     Consider a counterpart of system (\ref{main}) with the Dirichlet boundary conditions 
\begin{eqnarray} \label{mainom} 
  \left\{ \begin{array}{lcl}
\hfill   -\Delta u    &=& \Lambda  H(u)  \quad \text{in} \ \  \Omega \\
\hfill u &=&0  \qquad \text{on} \ \ \partial \Omega
\end{array}\right.
  \end{eqnarray}   
  where  $\Lambda=(\lambda_i)_i$ is a positive sequence of parameters and  $\Omega$ is a bounded domain in $\IR^n$.   Similarly, a solution $u$ of (\ref{mainom}) is said to be a stable solution if the linearized operator has a positive first eigenvalue.  The regularity of stable solutions depends on the dimension $n$,  domain $\Omega$ and also the nonlinearity $H$.   We refer the interested readers to the work of Montenegro \cite{Mont} for the notion of stability for the case of $m=2$. 
  
  For the case of $m=1$ and for explicit nonlinearities $H(u)=e^u$, $H(u)=(1+u)^p$ where $p>1$ and $H(u)=(1-u)^{-p}$ where $0<u<1$ and $p>0$, regularity of stable solutions and extremal solutions are now quite well understood, see for instance \cite{bcmr,bv,Cabre,cc,CC,cr,CR,egg1,egg2,Nedev,v,sv,SV12} and references therein.    It is well known that there exists a critical parameter  $ \Lambda^* \in (0,\infty)$, called the extremal parameter,  such that for all $ 0<\Lambda < \Lambda^*$ there exists a smooth, minimal solution $u_\Lambda$ of (\ref{mainom}).  Here the minimal solution means in the pointwise sense.  In addition for each $ x \in \Omega$ the map $ \Lambda \mapsto u_\Lambda(x)$ is increasing in $ (0,\Lambda^*)$.   This allows one to define the pointwise limit $ u^*(x):= \lim_{\Lambda \nearrow \Lambda^*} u_\Lambda(x)$  which can be shown to be a weak solution, in a suitably defined sense, of (\ref{mainom}).  For this reason $ u^*$ is called the extremal solution.      It is also known that for $ \Lambda >\Lambda^*$ there are no weak solutions of (\ref{mainom}).  Also one can show the minimal solution $ u_\Lambda$  is a stable solution of (\ref{mainom}).     Consider a general nonlinearity $H \in C^1(\mathbb R)$ that satisfies 
 \[ (R) \;\;  \mbox{$H$ is smooth, increasing and convex with $ H(0)=1 $ and $ H$ superlinear at $ \infty$.}\]    Brezis and V\'{a}zquez \cite{bv} raised the question of determining the boundedness of $u^*$, for general nonlinearities $H$ that satisfies (R).  Nedev in \cite{Nedev} showed that the extremal solution  of (\ref{main}) is bounded provided $1\le n\le 3$ when general nonlinearity $H$ satisfies (R).  The best known result on the regularity of extremal solutions for a general nonlinearity $H$ that satisfies (R) (no convexity on $H$ is imposed) was established by Cabr\'{e} in \cite{Cabre}  via geometric-type Sobolev inequalities provided $1 \le n \le 4$ and $\Omega$ is a convex domain. In this dimension, the convexity of the domain $\Omega$ was relaxed by Villegas in \cite{SV12}, however convexity of $H$ was assumed.  We also refer interested readers to \cite{cr} where regularity of stable solutions are proved up to seven dimensions  in domains of double revolution.   
      
 For the case of systems, i.e. $m\ge1$, Cowan and the author in \cite{cf} proved that the extremal solution of (\ref{main}) when $\Omega$ is a convex domain is regular provided $1\le n \le 3$, $m=2$ and $H_1(u_1,u_2)=f'(u_1)g(u_2)$ and  $H_2(u_1,u_2)=f(u_1)g'(u_2)$  for general nonlinearities $f,g\in C^1(\mathbb R)$ that satisfy (R).   This can be seen as a counterpart of the Nedev's result for elliptic gradient systems.  For explicit nonlinearities $ f(u_1)=(u_1+1)^p,  g(u_2)=(u_2+1)^q$ where $ p,q>2$, regularity of extremal solution is provided in dimensions $$ 1 \le n < 2 + \frac{4}{p+q-2} \max\{ t_+(p-1),t_+(q-1)\} \ \text{where} \  t_+(\alpha)= \alpha+ \sqrt{\alpha(\alpha-1)}.$$ For the Gelfand system,  regularity of the extremal solutions is given by Cowan in \cite{cowan} and by Dupaigne-Farina-Sirakov in \cite{dfs}.   For radial solutions,  it is also shown in \cite{cf} that stable solutions are regular in dimensions $1\le n <10$ when  $m=2$ and $H_1(u_1,u_2)=f'(u_1)g(u_2)$ and  $H_2(u_1,u_2)=f(u_1)g'(u_2)$ for general nonlinearities $f,g\in C^1(\mathbb R)$ that satisfy (R). This is a counterpart of the regularity result of Cabr\'{e}-Cappella \cite{cc} and Villegas \cite{sv} for elliptic gradient systems.

 Regarding regularity results, we provide an extension of the regularity results given by Cowan and the author in \cite{cf} to symmetric systems of the form (\ref{main}) where  $H=(H_i)_i$ for each $H_i \in C^1(\mathbb R^m)$ is a general nonlinearity.       In the next section, we state the      notion of symmetric systems. Then in Section \ref{sLiou},  we provide Liouville theorems for system (\ref{main}) and also the linearized system. Finally, in Section \ref{sReg}, we shall prove regularity results for elliptic system (\ref{main}).

 
\section{The notion of symmetric systems}\label{secstable}

Here is the notion of the symmetric systems. 
\begin{dfn}\label{symmetric} We call system (\ref{main}) symmetric if the matrix of gradient of all components of $H$ that is $$\mathbb{H}:=(\partial_i H_j(u))_{i,j=1}^{m}$$  is symmetric. 
   \end{dfn}
Note that when $m=1$ then system (\ref{main}) is clearly symmetric.    Let us start with the following  stability inequality that plays an important role in this paper.  For the case of systems, see \cite{fg, dfs, cf} for similar stability inequalities on $\mathbb R^n$ and on a bounded domain $\Omega$.  
  \begin{lemma}\label{stabineq}  
  Let $u$ denote a stable solution of (\ref{main}).  Then 
\begin{equation} \label{stability}
\sum_{i,j=1}^{m} \int  \sqrt{\partial_j H_i(u) \partial_i H_j(u)} \phi_i \phi_j \le \sum_{i=1}^{m} \int  |\nabla \phi_i|^2, 
\end{equation} 
for any $\phi=(\phi_i)_i^m$ where $ \phi_i \in L^{\infty} (\mathbb R^n) \cap H^1(\mathbb R^n)$ with compact support and $1\le i\le m$. 
\end{lemma}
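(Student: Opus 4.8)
The plan is to start from the definition of stability: there exists a positive solution $\zeta=(\zeta_i)_i$ of the linearized system $-\Delta\zeta_i=\sum_{j}\partial_jH_i(u)\zeta_j$. The standard device is to test this equation against $\phi_i^2/\zeta_i$ for a compactly supported $\phi_i$. First I would record the pointwise identity, valid wherever $\zeta_i>0$,
\[
\nabla\zeta_i\cdot\nabla\!\left(\frac{\phi_i^2}{\zeta_i}\right)=|\nabla\phi_i|^2-\left|\nabla\phi_i-\frac{\phi_i}{\zeta_i}\nabla\zeta_i\right|^2\le |\nabla\phi_i|^2,
\]
so that integrating the linearized equation against $\phi_i^2/\zeta_i$ and summing over $i$ yields
\[
\sum_{i,j}\int \partial_jH_i(u)\,\frac{\zeta_j}{\zeta_i}\,\phi_i^2\le\sum_i\int|\nabla\phi_i|^2.
\]
(One must justify that $\phi_i^2/\zeta_i$ is an admissible test function: $\zeta_i$ is continuous and positive, $\phi_i$ is bounded with compact support, so $\phi_i^2/\zeta_i\in L^\infty\cap H^1$ with compact support; this is routine.)

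The remaining task is to pass from $\sum_{i,j}\int \partial_jH_i(u)\frac{\zeta_j}{\zeta_i}\phi_i^2$ to the symmetric quadratic form $\sum_{i,j}\int\sqrt{\partial_jH_i(u)\partial_iH_j(u)}\,\phi_i\phi_j$. Here I would exploit the sign hypothesis $\partial_iH_j(u)\,\partial_jH_i(u)\ge0$ stated in the introduction. The diagonal terms $i=j$ match immediately. For an off-diagonal pair $i\ne j$, I would group the $(i,j)$ and $(j,i)$ contributions: after replacing $\phi_i^2$ by $\phi_i^2$ and $\phi_j^2$ respectively and using the elementary inequality $a t+ b t^{-1}\ge 2\sqrt{ab}$ for $a,b\ge 0$, $t>0$ — applied with $a=\partial_jH_i(u)\phi_i^2$, $b=\partial_iH_j(u)\phi_j^2$ and $t=\zeta_j/\zeta_i$ — one gets
\[
\partial_jH_i(u)\frac{\zeta_j}{\zeta_i}\phi_i^2+\partial_iH_j(u)\frac{\zeta_i}{\zeta_j}\phi_j^2\ge 2\sqrt{\partial_jH_i(u)\partial_iH_j(u)}\,|\phi_i\phi_j|\ge 2\sqrt{\partial_jH_i(u)\partial_iH_j(u)}\,\phi_i\phi_j .
\]
Summing these paired bounds over $1\le i<j\le m$, adding the diagonal terms, and combining with the displayed inequality above gives exactly \eqref{stability}. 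Note this argument uses only $\partial_iH_j\,\partial_jH_i\ge0$, not the full symmetry assumption, consistent with the fact that the lemma is stated for general stable solutions of \eqref{main}.

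The main obstacle I anticipate is not conceptual but technical: justifying the integration by parts and the use of $\phi_i^2/\zeta_i$ as a test function without assuming more regularity than is available — in particular handling the possibility that $\zeta_i$ is only $W^{2,p}_{\mathrm{loc}}$ or so, ensuring all integrals converge thanks to the compact support of $\phi_i$, and making sure the boundary/no-boundary terms vanish. A secondary point requiring a line of care is the step $|\phi_i\phi_j|\ge\phi_i\phi_j$ together with the nonnegativity of $\sqrt{\partial_jH_i\partial_iH_j}$, which is where the hypothesis $\partial_iH_j\,\partial_jH_i\ge0$ enters to make the square root real and the paired estimate meaningful. Everything else is a direct computation.
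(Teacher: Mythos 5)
Your proposal follows essentially the same route as the paper's own proof: test the linearized system against $\phi_i^2/\zeta_i$, bound $\int-\frac{\Delta\zeta_i}{\zeta_i}\phi_i^2$ by $\int|\nabla\phi_i|^2$ (your pointwise identity is exactly the content of the paper's appeal to Young's inequality), and then pair the $(i,j)$ and $(j,i)$ off-diagonal terms via the AM--GM inequality $at+bt^{-1}\ge 2\sqrt{ab}$ with $t=\zeta_j/\zeta_i$. Your write-up is in fact slightly more explicit about the admissibility of the test functions and about where the sign hypothesis $\partial_iH_j\,\partial_jH_i\ge 0$ enters, but the argument is the same.
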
  

\begin{proof}   From Definition \ref{dfnstable},  there is a sequence $\zeta=(\zeta_i)_i^m$ such that $ 0 < \zeta_i$ and 
\[  -\Delta \zeta_i =  \sum_{j=1}^{n} \partial_j H_i(u) \zeta_j \ \ \text{for all} \ \  i=1,\cdots,m.  \] 
Consider test function  $\phi=(\phi_i)_i^m$ where $ \phi_i \in L^{\infty} (\mathbb R^n) \cap H^1(\mathbb R^n)$ with compact support  and multiply both sides of the above inequalities with $\frac{\phi_i^2}{\zeta_i}$ to obtain
\begin{eqnarray*}
  \sum_{j=1}^{n}  \int \partial_j H_i(u) \zeta_j \frac{\phi_i^2}{\zeta_i} \le \int   - \frac{\Delta \zeta_i}{\zeta_i} {\phi_i^2}.
  \end{eqnarray*}
Note that from the  Young's inequality it is straightforward to see $$\int   - \frac{\Delta \zeta_i}{\zeta_i} {\phi_i^2} \le \int |\nabla \phi_i|^2 \ \ \text{for all} \ \  i=1,\cdots,m  .  $$
On the other hand, we have 
\begin{eqnarray*}
  \sum_{i,j=1}^{m}  \int \partial_j H_i(u) \zeta_j \frac{\phi_i^2}{\zeta_i}& =&   \sum_{i<j}^{m}  \int \partial_j H_i(u) \zeta_j \frac{\phi_i^2}{\zeta_i} + \sum_{i>j}^{n}  \int \partial_j H_i(u) \zeta_j \frac{\phi_i^2}{\zeta_i} + \int \partial_i H_i(u) {\phi_i^2}\\&=&
  \sum_{i<j}^{m}  \int \partial_j H_i(u) \zeta_j \frac{\phi_i^2}{\zeta_i} + \sum_{i<j}^{m}  \int \partial_i H_j(u) \zeta_i \frac{\phi_j^2}{\zeta_j} +\int \partial_i H_i(u) {\phi_i^2}
  \\&=& \sum_{i<j}^{m}  \int  \left( \partial_j H_i(u) \zeta_j \frac{\phi_i^2}{\zeta_i} + \partial_i H_j(u) \zeta_i \frac{\phi_j^2}{\zeta_j}  \right)+ \int \partial_i H_i(u) {\phi_i^2}
  \\&\ge & 2 \sum_{i<j}^{m}  \int   \sqrt{\partial_j H_i(u) \partial_i H_j(u)} \phi_i \phi_j+ \int \partial_i H_i(u) {\phi_i^2}
  \\&=&\sum_{i,j=1}^{m}  \int \sqrt{\partial_j H_i(u) \partial_i H_j(u)} \phi_i\phi_j.
  \end{eqnarray*}
This finishes the proof. 
\end{proof}  
For radial solutions of elliptic systems (\ref{main}), the following stability inequality holds. 
  
\begin{lemma} \label{stabineqgen}  
  Suppose that $u$ is a radial stable solution of  (\ref{main}).  Then 
\begin{eqnarray} \label{stabindep}
 &&(n-1) \sum_{i=1}^{m} \int_{\mathbb R^n}  \frac{u'^2_i(|x|)}{|x|^2} \phi^2(x) dx \le \sum_{i=1}^{m} \int_{\mathbb R^n}    u'^2_i(|x|)   |\nabla \phi(x)|^2 dx  \\ &
& \label{tail}+    \sum_{i,j=1}^{m} \int_{\mathbb R^n}  \left(  \partial_j H_i(u) -\sqrt{\partial_j H_i(u) \partial_i H_j(u) } \right)  u'_i (|x|) u'_j (|x|) \phi^2(x) dx
\end{eqnarray} 
   for all $ \phi \in L^{\infty} (\mathbb R^n) \cap H^1(\mathbb R^n)$ with compact support. 

\end{lemma}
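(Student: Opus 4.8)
The plan is to specialize the abstract stability inequality from Lemma \ref{stabineq} by choosing test functions adapted to the radial structure, namely $\phi_i(x) = u_i'(|x|)\,\psi(x)$ for a common cutoff $\psi \in L^\infty(\mathbb R^n)\cap H^1(\mathbb R^n)$ with compact support. First I would record the elementary facts needed: differentiating the radial equation $-\Delta u_i = H_i(u)$ shows that $w_i := \partial_r u_i = u_i'(|x|)$ satisfies the linearized-type equation $-\Delta w_i + \frac{n-1}{|x|^2} w_i = \sum_{j=1}^m \partial_j H_i(u)\, w_j$ in $\mathbb R^n\setminus\{0\}$ (this is the standard computation: $\Delta(\partial_r u_i) = \partial_r(\Delta u_i) + \frac{n-1}{r^2}\partial_r u_i$ for radial functions). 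Here one must be slightly careful near the origin, but since $u$ is a classical solution the functions $w_i$ and the cutoff handle this; if needed one inserts an additional cutoff vanishing near $0$ and lets it tend to the indicator, using that $\frac{w_i^2}{|x|^2}$ is locally integrable in the relevant dimensions — this is a routine approximation I would not belabor.

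Next I would substitute $\phi_i = w_i \psi$ into \eqref{stability}. On the right-hand side, expand
\[
\sum_i \int |\nabla(w_i\psi)|^2 = \sum_i \int w_i^2 |\nabla\psi|^2 + \sum_i \int \psi^2 |\nabla w_i|^2 + \sum_i \int \nabla(w_i^2)\cdot \nabla\psi\,\psi .
\]
The first term is exactly the gradient term appearing on the right of \eqref{stabindep}. For the remaining two terms I would integrate by parts, writing $\sum_i \int \psi^2 |\nabla w_i|^2 + \frac12\int \nabla(w_i^2)\cdot\nabla(\psi^2)$ and using the equation for $w_i$: testing $-\Delta w_i + \frac{n-1}{|x|^2}w_i = \sum_j \partial_j H_i(u) w_j$ against $w_i \psi^2$ converts $\int \psi^2|\nabla w_i|^2 + \frac12\int\nabla(w_i^2)\cdot\nabla(\psi^2)$ into $\int \nabla w_i\cdot\nabla(w_i\psi^2)$, which equals $-(n-1)\int \frac{w_i^2}{|x|^2}\psi^2 + \sum_j \int \partial_j H_i(u)\, w_i w_j \psi^2$. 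Summing over $i$, the right-hand side of \eqref{stability} becomes
\[
\sum_i \int w_i^2 |\nabla\psi|^2 \;-\;(n-1)\sum_i \int \frac{w_i^2}{|x|^2}\psi^2 \;+\; \sum_{i,j}\int \partial_j H_i(u)\, w_i w_j \psi^2 .
\]

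Finally I would combine this with the left-hand side of \eqref{stability}, which with $\phi_i = w_i\psi$ reads $\sum_{i,j}\int \sqrt{\partial_j H_i(u)\partial_i H_j(u)}\, w_i w_j \psi^2$. Moving this to the right and rearranging yields exactly
\[
(n-1)\sum_i \int \frac{w_i^2}{|x|^2}\psi^2 \le \sum_i \int w_i^2|\nabla\psi|^2 + \sum_{i,j}\int\Big(\partial_j H_i(u) - \sqrt{\partial_j H_i(u)\partial_i H_j(u)}\Big) w_i w_j \psi^2,
\]
which is \eqref{stabindep}. The only genuine subtlety — the main obstacle — is the behavior at the origin: one needs $w_i\psi \in H^1$ with compact support to legitimately use it in \eqref{stability}, and the term $\frac{n-1}{|x|^2}w_i$ in the equation for $w_i$ is singular there. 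I would handle this by first proving the inequality with $\psi$ replaced by $\psi\,\eta_\varepsilon$ where $\eta_\varepsilon$ vanishes in $B_\varepsilon(0)$ and equals $1$ outside $B_{2\varepsilon}(0)$, checking that the extra commutator terms involving $\nabla\eta_\varepsilon$ vanish as $\varepsilon\to 0$ (using $|w_i|\le C|x|$ near $0$ since $u$ is $C^1$, so $w_i^2|\nabla\eta_\varepsilon|^2 \lesssim \varepsilon^{-2}\cdot\varepsilon^2$ on an annulus of measure $\varepsilon^n$), and then passing to the limit by monotone/dominated convergence.
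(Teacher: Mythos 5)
Your proposal is correct and follows essentially the same route as the paper: differentiate the equation in $r$, test the stability inequality \eqref{stability} with $\phi_i = u_i'\phi$, and use the differentiated equation tested against $u_i'\phi^2$ to trade the $|\nabla u_i'|^2$ and cross terms for the $\frac{n-1}{|x|^2}$ and nonlinearity terms. The extra care you take near the origin (the cutoff $\eta_\varepsilon$ and the bound $|u_i'|\le C|x|$) is a point the paper passes over silently, but it does not change the argument.
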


\begin{proof}  Taking derivative of (\ref{main}) with respect to $r$ gives 
 \begin{equation}\label{pderadial}
   -\Delta u'_i +\frac{n-1}{r^2} u'_i   =  \sum_{j=1}^m  \partial_j H_i(u) u'_j \quad \text{for} \quad 0<r<1 \quad\text{and} \quad i=1,\cdots,m. 
  \end{equation} 
Multiply the $i^{th}$ equation of (\ref{pderadial}) with $u'_i \phi^2$   for all $ \phi \in L^{\infty} (\mathbb R^n) \cap H^1(\mathbb R^n)$ with compact support gives 
\begin{equation}\label{integralradial}
\int  |\nabla u'_i|^2\phi^2 +\frac{1}{2} \nabla {u'_i}^2\cdot\nabla\phi^2 +\frac{n-1}{r^2} {u'_i}^2\phi^2   = \int  \sum_{j=1}^m  \partial_j H_i(u) {u'}_ju'_i \phi^2
  \end{equation}
  for all $0<r<1$ and  $ i=1,\cdots,m$.  On the other hand, testing (\ref{stability}) on $\phi_i = u'_i\phi$  where  $\phi$ is the same test function as above then   we get 
  \begin{equation} \label{stabsub}
  \sum_{i,j=1}^{m} \int  \sqrt{\partial_j H_i(u) \partial_i H_j(u)} u'_i u'_j\phi^2   \le \sum_{i=1}^{m}  \int  |\nabla ( u'_i\phi)|^2 . 
\end{equation} 
  Expanding the right-hand side we get 
  \begin{equation}\label{stabuphi}
  \sum_{i=1}^{m}  \int  |\nabla ( u'_i\phi)|^2 =  \sum_{i=1}^{m}   \int  |\nabla u'_i|^2\phi^2+ {u'_i}^2|\nabla \phi|^2 +\frac{1}{2} \nabla \phi^2\cdot\nabla {u'_i}^2. 
  \end{equation}
  From (\ref{integralradial}) we get the following equality for part of the right-hand side of  (\ref{stabuphi})
 \begin{equation} \label{rhs}
  \sum_{i=1}^{m}  \int  |\nabla u'_i|^2\phi^2 +\frac{1}{2} \nabla \phi^2\cdot\nabla {u'_i}^2 = \sum_{i,j=1}^m \int \partial_j H_i(u) {u'}_ju'_i \phi^2 - \sum_{i=1}^{m} \int \frac{n-1}{r^2} {u'_i}^2\phi^2  . 
    \end{equation}
Now from (\ref{stabsub}),  (\ref{stabuphi}) and (\ref{rhs})  we get 
  \begin{eqnarray*}
  \sum_{i,j=1}^{m} \int  \sqrt{\partial_j H_i(u) \partial_i H_j(u)} u'_i u'_j\phi^2   &\le&  \sum_{i=1}^{m}   \int   {u'_i}^2|\nabla \phi|^2 +\sum_{i,j=1}^m \int  \partial_j H_i(u) {u'}_ju'_i \phi^2 \\&& - \sum_{i=1}^{m} \int \frac{n-1}{r^2} {u'_i}^2\phi^2  .
  \end{eqnarray*}
   
\end{proof}  
\begin{remark} Note that for symmetric systems the tail of the inequality (\ref{stabindep}) that is (\ref{tail}) vanishes.  Therefore, the nonlinearity $H$ does not appear in the stability inequality for symmetric systems.  Applying this inequality to a radial test function $\phi(|x|)$ one can see that 
  \begin{equation} \label{Rstablesym}
(n-1) \sum_{i=1}^{m} \int_{0}^{\infty}  {u'_i}^2(t) \phi^2(t) t^{n-3} dt \le \sum_{i=1}^{m} \int_{0}^\infty    {u'_i}^2(t)   \phi'(t)^2 t^{n-1} dt  
\end{equation} 
where $\phi \in L^{\infty} (\mathbb R^+) \cap H^1(\mathbb R^+)$ is a compactly supported test function.  
 \end{remark}
 
 
\section{Liouville theorems for symmetric systems}\label{sLiou}
In this section, we provide Liouville theorems for a linearized elliptic system associated to (\ref{main}), then we establish an optimal Liouville theorem for radial stable solutions of (\ref{main}) with a general nonlinearity $H$.  

\subsection{A Liouville theorem for the linearized system} Suppose that $u$ is a $H$-monotone solution of symmetric system (\ref{main}). A solution $u=(u_k)_{k=1}^m$ of (\ref{main}) is said to be {\it $H$-monotone} if the following hold:
\begin{enumerate}
 \item[(i)] For every $1\le i \le m$, each $u_i$ is strictly monotone in the $x_n$-variable (i.e., $\partial_n u_i\neq 0$).

\item[(ii)]  For $i\le j$,  we have 
  \begin{equation}
\hbox{$\partial_j H_i(u) \partial_n u_i(x) \partial_n u_j (x)\ge 0$  for all $x\in\mathbb {R}^n$.}
\end{equation}
\end{enumerate}
See \cite{fg} for more details.   Let $\phi_i := \partial _n u_i$ and $\psi_i:=\nabla u_i\cdot\eta$ for any fixed $\eta=(\eta',0)\in \mathbb{R}^{n-1}\times\{0\}$ in such a way 
that $\sigma_i:=\frac{\psi_i}{\phi_i}$.  Then $(\phi_i)_i$ and $(\psi_i)_i$ satisfy (\ref{sta}).  Straightforward calculations show that for $H$-monotone solutions we have
\begin{eqnarray*}
\sum_{i=1}^m \sigma_i\div(\phi_i^2 \nabla \sigma_i) &=& \sum_{i,j} \phi_i \phi_j \partial_j H_i(u) \sigma_i (\sigma_i-\sigma_j)\\&=& \sum_{i< j}   \phi_i \phi_j \partial_j H_i(u)  \sigma_i (\sigma_i-\sigma_j) + \sum_{i> j}  \phi_i \phi_j \partial_j H_i(u)  \sigma_i  (\sigma_i-\sigma_j)  \\&=&\sum_{i< j}  \phi_i \phi_j \partial_j H_i(u)   \sigma_i (\sigma_i-\sigma_j)  + \sum_{i< j}  \phi_i \phi_j \partial_j H_i(u)  \sigma_j (\sigma_j-\sigma_i)  \\&=&\sum_{i< j}  \phi_i \phi_j \partial_j H_i(u)  (\sigma_i-\sigma_j)^2 \ge 0. 
  \end{eqnarray*}
In what follows we prove a Liouville theorem for this differential inequality. Note that for $m=1$ this type of Liouville theorem was noted by Berestycki, Caffarelli and Nirenberg in \cite{bcn} and used by Ghoussoub-Gui \cite{gg1} and later by Ambrosio and Cabr\'{e} \cite{ac}  to prove the De Giorgi's conjecture \cite{DeGiorgi} in dimensions two and three. See also \cite{mos}.  For the case of $m\ge 1$ this improves a linear Liouville theorem that is proved by Ghoussoub and the author in \cite{fg} and applied to establish De Giorgi type results for elliptic systems. Note that the proof of the De Giorgi's conjecture for a general nonlinearity provided by Ghoussoub-Gui \cite{gg1} and Alberti, Ambrosio and Cabr\'{e}  \cite{aac} for dimensions two and three, respectively. 

 Consider the set of functions with a limited growth at infinity as $$\mathcal F=\left\{F:\mathbb R^+\to\mathbb R^+, F \  \text{is nondecreasing and} \ \int_{2}^{\infty} \frac{dr}{rF(r)}=\infty\right\}.$$
In particular, $F(r)=\log r$ belongs to this class and $F(r)=r$ does not belong to $\mathcal F$. As far as we know,  this class of functions was considered by Karp \cite{k1,k2} for the first time. Here is the Liouville theorem. 

\begin{thm}\label{liouville} Assume that each $(\phi_i)_{i=1}^{m} \in L^{\infty}_{loc}(\mathbb {R}^n) $ does not change sign in $\mathbb{R}^n$ and $(\sigma_i)_{i=1}^{m} \in H^1_{loc}(\mathbb{R}^n)$ is such that
\begin{equation}\label{liouassum}
\limsup_{R\to \infty} \frac{1}{R^2F(R)}\sum_{i=1}^{m}\int_{B_{2R}\setminus B_R}  \phi_i^2\sigma_i^2< \infty,
 \end{equation}
for some $F\in \mathcal{F}$. Suppose also that $(\sigma_i)_i$ is a solution of 
\begin{eqnarray}
\label{div}
\sum_{i=1}^m \sigma_i\div(\phi_i^2 \nabla \sigma_i) \ge 0   \ \ \text{in}\ \ \mathbb{R}^n. 
  \end{eqnarray}
 Then, for all  $i=1,...,m$, the functions $\sigma_i$ are constant.
\end{thm}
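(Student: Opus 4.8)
The plan is to adapt the classical Berestycki--Caffarelli--Nirenberg / Karp argument to the coupled setting by testing the differential inequality \eqref{div} against $\sigma_i$ multiplied by a suitable cutoff, and then exploiting that the growth hypothesis \eqref{liouassum} is \emph{summed} over $i$ so that a single family of cutoffs handles all components simultaneously. Concretely, let $\chi$ be a fixed smooth cutoff with $\chi\equiv 1$ on $B_1$, $\chi\equiv 0$ outside $B_2$, and for a parameter $R>0$ set $\chi_R(x)=\chi(x/R)$; multiply the $i$-th term of \eqref{div} by $\chi_R^2$, sum over $i$, and integrate by parts. Since $\sigma_i\,\div(\phi_i^2\nabla\sigma_i)\ge 0$ pointwise after summation, integrating $\sum_i \chi_R^2\,\sigma_i\,\div(\phi_i^2\nabla\sigma_i)\ge 0$ and moving derivatives onto the test function yields, for each $i$,
\[
\int \chi_R^2\,\phi_i^2\,|\nabla\sigma_i|^2 \;\le\; -\,2\int \chi_R\,\sigma_i\,\phi_i^2\,\nabla\sigma_i\cdot\nabla\chi_R,
\]
and summing in $i$ and applying Cauchy--Schwarz (with a Young's inequality to absorb half of the gradient term) gives the Caccioppoli-type bound
\[
\sum_{i=1}^m \int \chi_R^2\,\phi_i^2\,|\nabla\sigma_i|^2 \;\le\; 4\sum_{i=1}^m \int \phi_i^2\,\sigma_i^2\,|\nabla\chi_R|^2 \;\le\; \frac{C}{R^2}\sum_{i=1}^m \int_{B_{2R}\setminus B_R}\phi_i^2\,\sigma_i^2.
\]

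The second step is to upgrade this to the sharper annular estimate that makes the Karp class $\mathcal F$ enter. Instead of a single cutoff one takes $\chi_R$ to be a logarithmically-graded cutoff: fix $R$ large, let $\chi$ equal $1$ on $B_R$ and $0$ outside $B_{R_0}$ for some $R_0\gg R$, and interpolate so that on each dyadic annulus $B_{2^{k+1}R}\setminus B_{2^kR}$ one has $|\nabla\chi|\lesssim (2^kR\,F(2^kR))^{-1}$ times the appropriate telescoping weight; more cleanly, one uses the standard device of choosing $\chi_R(x)=\eta\!\left(\int_{|x|}^{\cdot}\frac{dr}{rF(r)}\right)$-type profiles so that $\int |\nabla\chi_R|^2\,\phi_i^2\sigma_i^2$ is controlled by $\big(\int_R^{R_0}\frac{dr}{rF(r)}\big)^{-1}$ times $\sup_\rho \frac{1}{\rho^2 F(\rho)}\int_{B_{2\rho}\setminus B_\rho}\sum_i\phi_i^2\sigma_i^2$. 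By \eqref{liouassum} the supremum is finite, and since $F\in\mathcal F$ we have $\int_R^{R_0}\frac{dr}{rF(r)}\to\infty$ as $R_0\to\infty$; hence the right-hand side tends to $0$. This forces $\sum_i\int_{B_R}\phi_i^2|\nabla\sigma_i|^2=0$ for every fixed $R$, i.e. $\phi_i^2|\nabla\sigma_i|^2\equiv 0$ a.e.

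The final step is to conclude that each $\sigma_i$ is constant. On the open set where $\phi_i\neq 0$ we get $\nabla\sigma_i=0$, so $\sigma_i$ is locally constant there; since $\phi_i\in L^\infty_{loc}$ does not change sign, its zero set has empty interior unless $\phi_i\equiv 0$ on a component, and a short connectedness/continuity argument (using $\sigma_i\in H^1_{loc}$, so $\sigma_i$ has no jumps across the measure-zero nodal set) propagates the constant across $\mathbb R^n$; in the degenerate case $\phi_i\equiv 0$ the $i$-th term drops out of \eqref{div} entirely and $\sigma_i$ is unconstrained by that term, but then one simply notes that the statement about $\sigma_i$ is vacuous or re-reads the hypothesis — in the intended application $\phi_i=\partial_n u_i\neq 0$ by $H$-monotonicity, so this case does not occur. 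I expect the main obstacle to be the bookkeeping in the second step: making the logarithmic cutoff estimate rigorous when the weights $\phi_i^2\sigma_i^2$ are merely $L^1_{loc}$ and only their \emph{sum} is controlled on annuli, so one must be careful to choose one cutoff profile that simultaneously beats the growth of all $m$ weights — which works precisely because \eqref{liouassum} is phrased as a bound on the sum rather than term-by-term.
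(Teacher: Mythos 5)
Your proof is correct, but it takes a genuinely different route from the paper's. The paper runs the Berestycki--Caffarelli--Nirenberg/Karp argument in its ``differential inequality'' form: it sets $D(R)=\sum_i\int_{B_R}\phi_i^2|\nabla\sigma_i|^2$, integrates the pointwise inequality $\sum_i\phi_i^2|\nabla\sigma_i|^2\le\sum_i\div(\phi_i^2\sigma_i\nabla\sigma_i)$ over $B_R$, applies Cauchy--Schwarz on $\partial B_R$ to obtain $D(R)\le D'(R)^{1/2}\bigl(\int_{\partial B_R}\sum_i(\phi_i\sigma_i)^2\bigr)^{1/2}$, and then integrates the resulting inequality $D'/D^2\ge\bigl(\int_{\partial B_R}\sum_i(\phi_i\sigma_i)^2\bigr)^{-1}$ over dyadic intervals $[2^kr_0,2^{k+1}r_0]$; since \eqref{liouassum} and $F\in\mathcal F$ make the left-hand sides sum to $+\infty$ while the right-hand sides telescope to at most $1/D(r_0)$, one forces $D\equiv 0$. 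You instead use the Caccioppoli/graded-cutoff form: a cutoff profiled on $\int_{|x|}^{R_0}\frac{dr}{rF(r)}$ yields $\sum_i\int_{B_R}\phi_i^2|\nabla\sigma_i|^2\lesssim\bigl(\int_R^{R_0}\frac{dr}{rF(r)}\bigr)^{-1}\to 0$ as $R_0\to\infty$. These are essentially dual implementations of the same idea; yours works entirely with weak formulations and avoids both the a.e.\ differentiability of $R\mapsto D(R)$ and the surface integrals over $\partial B_R$, at the cost of a more elaborate cutoff construction. Two small points. First, your intermediate display is asserted ``for each $i$,'' but \eqref{div} only controls the \emph{sum} over $i$, so the Caccioppoli bound is available only after summation; your final displayed estimate is the summed one, so nothing breaks, but the intermediate claim should be restated. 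Second, passing from $\phi_i^2|\nabla\sigma_i|^2\equiv 0$ to ``$\sigma_i$ is constant'' requires $\phi_i\neq 0$ a.e.; you flag this degenerate case explicitly, whereas the paper leaves it implicit, so you are if anything more careful here.
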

\begin{proof}  Since $(\sigma_i)_i$ satisfies (\ref{div}), straightforward calculations show that 
\begin{eqnarray} \label{}
\div(\phi_i^2 \sigma_i \nabla \sigma_i) = |\nabla \sigma_i|^2\phi_i^2+ \sigma_i\div(\phi_i^2 \nabla \sigma_i).  
 \end{eqnarray}
Therefore, 
\begin{eqnarray}
 \sum_{i=1}^m |\nabla \sigma_i|^2\phi_i^2  \le \sum_{i=1}^m \div(\phi_i^2 \sigma_i \nabla \sigma_i)   . \end{eqnarray}
Integrating both sides we get 
\begin{eqnarray*}
\sum_{i} \int_{B_R} |\nabla \sigma_i|^2\phi_i^2  &\le&  \sum_{i} \int_{B_R} \div(\phi_i^2 \sigma_i \nabla \sigma_i) = \sum_{i}  \int_{\partial B_R} \phi_i^2 \sigma_i \nabla \sigma_i\cdot\eta
\\&\le &  \sum_{i} \int_{\partial B_R} \phi_i^2 |\sigma_i| |\nabla \sigma_i|\\
&\le&  \left(\sum_{i} \int_{\partial B_R} (\phi_i\sigma_i)^2\right)^\frac{1}{2} \left(\sum_{i} \int_{\partial B_R}    |\nabla \sigma_i|^2 \phi_i^2 \right)^\frac{1}{2}.
   \end{eqnarray*}
If all $\sigma_i$ for $i=1,\cdots,m$ are not constant, then there exists $R_0 > 0$ such that $D(R) > 0$ for every $R > R_0$ and 
\begin{eqnarray}\label{D}
D(R) \le D'(R)^\frac{1}{2} \left(\int_{\partial B_R} \sum_{i}   (\phi_i\sigma_i)^2  \right)^\frac{1}{2},
   \end{eqnarray}
where $D(R):=\sum_{i} \int_{B_R} |\nabla \sigma_i|^2\phi_i^2 $. Integrating (\ref{D}) and using the Schwarz inequality we get that for $r_2>r_1>R_0$, 
\begin{eqnarray*}
(r_2-r_1)^2  \left(  \int_{B_{r_2}\setminus B_{r_1}} \sum_{i}   (\phi_i\sigma_i)^2    \right)^{-1}  &\le&
\int_{r_1}^{r_2} \left(\int_{\partial {B_R}} \sum_{i}   (\phi_i\sigma_i)^2    \right)^{-1} dR\\&\le&  \int_{r_1}^{r_2} \frac{D'(R)}{D^2(R)}dR \\&=&\frac{1}{D(r_1)}-\frac{1}{D(r_2)}.
  \end{eqnarray*}

Now, take $r_2=2^{k+1}r_0$ and $r_1=2^{k}r_0$ for fixed $r_0>R_0$ and $k\ge 0$.  From (\ref{liouassum}) we get that $D(r_0)=0$ for $r_0>R_0$ which is a contradiction.

\end{proof}

Here is a Liouville theorem that is an application of  Theorem \ref{liouville}.     

\begin{thm} Suppose that $u=(u_i)_i$ is bounded stable solution of symmetric system (\ref{main}) where $H=(H_i)_i$ for $0\le H_i\in C^1(\mathbb R^m)$ and $m\ge 1$.  Then each $u_i$ is constant provided $1\le n\le 4$. 
\end{thm}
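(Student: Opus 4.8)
The plan is to run the scheme of Dupaigne--Farina \cite{df} for the scalar equation, in the vectorial form used by Ghoussoub and the author \cite{fg}: extract an energy bound from the sign condition $H_i\ge 0$, and then feed a suitable triple into the linear Liouville result, Theorem \ref{liouville}. First, since each $H_i\ge 0$, every component $u_i$ is bounded and superharmonic on $\mathbb{R}^n$. Testing the $i$-th equation $-\Delta u_i=H_i(u)$ against $u_i\eta_R^2$, where $\eta_R$ is a standard cut-off ($\eta_R\equiv 1$ on $B_R$, $\mathrm{supp}\,\eta_R\subset B_{2R}$, $|\nabla\eta_R|\le C/R$), using $\int H_i(u)u_i\eta_R^2\le\|u_i\|_{L^\infty}\int H_i(u)\eta_R^2$ together with $\int H_i(u)\eta_R^2=2\int\eta_R\nabla u_i\cdot\nabla\eta_R$, and absorbing terms by Young's inequality, one obtains the energy estimate
\[
\int_{B_R}|\nabla u_i|^2\,dx\le C\,R^{n-2},\qquad \int_{B_R}H_i(u)\,dx\le C\,R^{n-2}
\]
for all $R\ge 1$, with $C$ depending only on $n$, $m$, $\|u\|_{L^\infty}$. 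Only $H_i\ge 0$ and boundedness are used here; stability enters at the next step.

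Now apply Theorem \ref{liouville}. By Definition \ref{dfnstable} there is $\zeta=(\zeta_i)_i$ with $\zeta_i>0$ solving the linearized system (\ref{sta}); differentiating (\ref{main}) in any fixed direction $e\in\mathbb{R}^n$ shows that $(\partial_e u_i)_i$ solves the same linear system. Set $\phi_i:=\zeta_i$ (positive, locally bounded) and $\sigma_i:=\partial_e u_i/\zeta_i\in H^1_{loc}(\mathbb{R}^n)$. Because both $(\zeta_i)_i$ and $(\partial_e u_i)_i$ satisfy the same linear system, the algebraic identity preceding Theorem \ref{liouville} gives, after using the symmetry of $\mathbb{H}$ (i.e.\ $\partial_jH_i(u)=\partial_iH_j(u)$),
\[
\sum_{i=1}^m\sigma_i\,\div(\phi_i^2\nabla\sigma_i)=\sum_{i<j}\partial_jH_i(u)\,\zeta_i\zeta_j\,(\sigma_i-\sigma_j)^2\ge 0,
\]
which is precisely (\ref{div}). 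Moreover $\phi_i^2\sigma_i^2=(\partial_e u_i)^2\le|\nabla u_i|^2$, so by the energy estimate $\frac{1}{R^2F(R)}\sum_i\int_{B_{2R}\setminus B_R}\phi_i^2\sigma_i^2\le C\,R^{n-4}/F(R)$, which stays bounded (indeed as $R\to\infty$) with $F\equiv 1\in\mathcal{F}$ exactly when $n\le 4$. Hence (\ref{liouassum}) holds, and Theorem \ref{liouville} forces each $\sigma_i$ to be constant, i.e.\ $\partial_e u_i=c_{i,e}\,\zeta_i$ on $\mathbb{R}^n$.

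Applying this for $e=e_1,\dots,e_n$ yields $\nabla u_i=\zeta_i\,\mathbf{c}_i$ with $\mathbf{c}_i=(c_{i,1},\dots,c_{i,n})$ constant. If $\mathbf{c}_i=0$ then $u_i$ is constant; otherwise $u_i$ depends on the single variable $t:=\mathbf{c}_i\cdot x$, say $u_i=w_i(t)$, and then $-\Delta u_i=-|\mathbf{c}_i|^2 w_i''=H_i(u)\ge 0$, so $w_i$ is concave and bounded on $\mathbb{R}$, hence constant. Either way each $u_i$ is constant. I expect the main obstacle to be arranging the triple $(\phi_i,\sigma_i,F)$ so that the sign hypothesis on $(\phi_i)_i$, the divergence inequality (\ref{div}), and the growth condition (\ref{liouassum}) hold at once: the non-vanishing-sign requirement forces one to work with the abstract stability function $\zeta$ rather than with a derivative of $u$, which is exactly why the symmetry of $\mathbb{H}$ is genuinely used to collapse $\sum_i\sigma_i\div(\phi_i^2\nabla\sigma_i)$ into a sum of squares; and since $\mathcal{F}$ contains no power of $R$, the $R^{n-2}$ growth coming from the energy estimate is admissible precisely in the range $1\le n\le 4$. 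The one properly analytic point is the energy estimate itself, with constants independent of $R$; the remainder is the bookkeeping above.
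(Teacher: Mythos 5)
Your overall strategy is the same as the paper's: derive the energy bound $\sum_i\int_{B_R}|\nabla u_i|^2\le C R^{n-2}$ from $H_i\ge 0$ and boundedness, feed quotients of solutions of the linearized system into Theorem \ref{liouville} with the admissible choice $F\equiv 1$ (so that $R^{n-4}$ is bounded exactly for $n\le 4$), and finish by reducing to a one-dimensional profile. The energy estimate, the verification of (\ref{liouassum}), and the final one-dimensional step (concave bounded functions on $\mathbb{R}$ are constant) are all fine.

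The gap is in the displayed identity
\[
\sum_{i=1}^m\sigma_i\,\div(\phi_i^2\nabla\sigma_i)=\sum_{i<j}\partial_jH_i(u)\,\zeta_i\zeta_j\,(\sigma_i-\sigma_j)^2\;\ge\;0 .
\]
The identity itself is correct (for any two solutions of the same linearized system, with $\phi_i=\zeta_i$, $\psi_i=\partial_e u_i$, $\sigma_i=\psi_i/\phi_i$, the diagonal terms cancel and symmetry of $\mathbb{H}$ collapses the off-diagonal sum into squares), but the asserted sign does \emph{not} follow. Since $\zeta_i\zeta_j>0$, nonnegativity of each summand requires $\partial_jH_i(u)\ge 0$ for $i<j$, i.e.\ cooperativity of the system, which is nowhere among the hypotheses: symmetry gives $\partial_jH_i=\partial_iH_j$, and the paper's standing assumption $\partial_iH_j\,\partial_jH_i\ge 0$ is automatic for symmetric systems and carries no sign information on the individual entries. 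For $m=1$ the cross-sum is empty and your argument is exactly the Dupaigne--Farina/Ambrosio--Cabr\'e scheme, but for $m\ge 2$ and a non-cooperative symmetric $H$ the quantity $\sum_i\sigma_i\div(\phi_i^2\nabla\sigma_i)$ may be negative, so hypothesis (\ref{div}) of Theorem \ref{liouville} is not verified. Note that this is precisely the point the quantity $\partial_jH_i(u)\phi_i\phi_j\ge 0$ is designed to control in the $H$-monotone setting (condition (ii) there makes the product $\partial_jH_i\,\phi_i\phi_j$ nonnegative by choosing $\phi_i=\partial_nu_i$ with compensating signs); with $\phi_i=\zeta_i>0$ you lose that freedom. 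To be fair, the paper's own proof is silent about which ``quotients of partial derivatives'' are used and how the sign of the cross terms is obtained, so it faces the same difficulty; but as written your inequality is an unproved claim, and it is the crux of the argument, so the proof is incomplete without either adding the cooperativity assumption $\partial_jH_i\ge 0$ ($i\ne j$) or supplying a different mechanism for the sign.
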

\begin{proof}
Multiply both sides of system (\ref{main}) with $(u_i-||u_i||_{\infty})\phi^2$ where $\phi$ is a test function. Since $ H_{i}(u) (u_i-||u_i||_{\infty})\le 0$ we have 
\begin{eqnarray}
\label{supersol}
\hfill -\Delta u_i (u_i-||u_i ||_\infty)\phi^2 \le 0  \ \ \text{in}\ \ \mathbb {R}^n.
  \end{eqnarray}
After an integration by parts, we end up with 
\begin{eqnarray}
\label{}
\int_{B_R} |\nabla  u_i|^2\phi^2\le 2 \int_{B_R} |\nabla u_i||\nabla \phi |(||u_i ||_\infty-u_i)\phi \ \ \text{for all $1\le i\le m$.}
  \end{eqnarray}
Using Young's inequality and adding we get 
\begin{equation}\label{decayn-2}
\sum_{i=1}^m \int_{B_R} |\nabla  u_i|^2 \le R^{n-2}.
  \end{equation}
Now one can apply Theorem \ref{liouville} to quotients of partial derivatives to obtain that each $u_i$ is one dimensional solutions as long as $n\le 4$. Note that $u_i$ is a bounded solution for (\ref{supersol}) in dimension one, and the corresponding decay estimate (\ref{decayn-2}) now implies that  $u_i$ must be constant for all $1\le i\le m$.  

\end{proof}
Inspired by De Giorgi type results given in \cite{fg}, we have the following immediate consequence of Theorem \ref{liouville}.  

\begin{thm}
 Suppose that $u=(u_i)_i$ is a $H$-monotone solution of symmetric orientable system (\ref{main}) where $H=(H_i)_i$ for $H_i\in C^1(\mathbb R^m)$ and $m\ge 1$.  Then each $u_i$ is a one dimensional function provided $1\le n\le 3$. 
\end{thm}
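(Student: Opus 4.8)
The plan is to reduce this De Giorgi type statement for $H$-monotone solutions of a symmetric system to an application of the linear Liouville theorem (Theorem \ref{liouville}), exactly in the spirit of \cite{fg,bcn,gg1,ac}. Recall that for an $H$-monotone solution of a symmetric system we have already established, in the discussion preceding Theorem \ref{liouville}, that the quotients $\sigma_i := \psi_i/\phi_i$ with $\phi_i = \partial_n u_i$ and $\psi_i = \nabla u_i\cdot\eta$ (for a fixed horizontal direction $\eta=(\eta',0)$) satisfy the differential inequality
\begin{equation*}
\sum_{i=1}^m \sigma_i\div(\phi_i^2\nabla\sigma_i)\ge 0 \quad\text{in }\mathbb R^n.
\end{equation*}
So Theorem \ref{liouville} applies provided we can verify the growth hypothesis \eqref{liouassum}, namely that
\begin{equation*}
\limsup_{R\to\infty}\frac{1}{R^2 F(R)}\sum_{i=1}^m\int_{B_{2R}\setminus B_R}\phi_i^2\sigma_i^2 < \infty
\end{equation*}
for some $F\in\mathcal F$. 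Since $\phi_i^2\sigma_i^2 = \psi_i^2 = (\nabla u_i\cdot\eta)^2 \le |\nabla u_i|^2$, this reduces to a gradient bound: it suffices to show $\sum_i\int_{B_{2R}\setminus B_R}|\nabla u_i|^2 \le C R^{n-2}$, and then take $F(R)=\log R$ when $n\le 3$, since $R^{n-2}/(R^2\log R) = R^{n-4}/\log R \to 0$ for $n\le 3$ (indeed it is bounded already for $n\le 4$, but $F\in\mathcal F$ forces the strict inequality $n\le 3$).

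The energy estimate $\sum_i\int_{B_R}|\nabla u_i|^2\le C R^{n-2}$ is obtained exactly as in the proof of the preceding theorem, but now using stability rather than nonnegativity of $H$. Concretely, one feeds the test functions $\phi_i = \phi\,\partial_n u_i$ (with $\phi$ a standard cutoff, $\phi\equiv 1$ on $B_R$, supported in $B_{2R}$, $|\nabla\phi|\le C/R$) into the stability inequality \eqref{stability} of Lemma \ref{stabineq}. Differentiating \eqref{main} in $x_n$ gives $-\Delta(\partial_n u_i) = \sum_j \partial_j H_i(u)\,\partial_n u_j$; multiplying by $\phi^2\partial_n u_i$, integrating, summing over $i$, and combining with \eqref{stability} tested on $\phi\,\partial_n u_i$ makes the terms involving $\partial_j H_i(u)$ cancel against the symmetrized stability terms — this is where symmetry of $\mathbb H$ is used, since for symmetric systems $\sqrt{\partial_j H_i\,\partial_i H_j} = \partial_j H_i$ in the relevant quadratic form, just as in the Remark following Lemma \ref{stabineqgen}. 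What survives is the usual inequality $\sum_i\int |\nabla(\partial_n u_i)|^2\phi^2 \le C\sum_i\int (\partial_n u_i)^2|\nabla\phi|^2 \le C R^{n-2}\|\nabla u\|_\infty^2$, where boundedness of $\nabla u$ follows from $u$ bounded and $H\in C^1$ via elliptic estimates. From this one extracts the desired $R^{n-2}$ bound on $\sum_i\int_{B_{2R}\setminus B_R}|\nabla u_i|^2$ (alternatively, since $\partial_n u_i$ does not change sign under the $H$-monotonicity hypothesis, one can argue more directly).

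With the growth bound in hand, Theorem \ref{liouville} yields that each $\sigma_i = (\nabla u_i\cdot\eta)/\partial_n u_i$ is constant in $\mathbb R^n$, for every horizontal direction $\eta$. This means $\nabla u_i\cdot\eta = c_i(\eta)\,\partial_n u_i$ pointwise; since this holds for all $\eta\in\mathbb R^{n-1}\times\{0\}$, the spatial gradient $\nabla u_i$ has everywhere the same direction, so $u_i$ depends on one variable only, i.e. $u_i(x) = g_i(a\cdot x)$ for a fixed unit vector $a$ (the same $a$ can be taken for all $i$ up to sign, using the $H$-monotonicity relation between components). I expect the main obstacle to be organizing the cancellation in the energy estimate cleanly: one must check that plugging $\phi_i = \phi\,\partial_n u_i$ into \eqref{stability} and subtracting the integrated form of the differentiated equation really does eliminate all nonlinear terms without sign errors, and that the symmetry hypothesis is invoked at precisely the right place (so that $\partial_j H_i\,\partial_n u_i\,\partial_n u_j$ matches $\sqrt{\partial_j H_i\,\partial_i H_j}\,\partial_n u_i\,\partial_n u_j$). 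Everything else — the cutoff manipulations, the Cauchy–Schwarz/Young steps, and the final "constant quotient implies one-dimensional" deduction — is routine and parallels the scalar argument of \cite{gg1,ac}.
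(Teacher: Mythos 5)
Your overall strategy --- feed the quotients $\sigma_i=(\nabla u_i\cdot\eta)/\partial_n u_i$ into Theorem \ref{liouville} and verify the growth condition \eqref{liouassum} for every horizontal direction $\eta$ --- is exactly the intended one (the paper omits the proof and defers to \cite{fg}, where this is the scheme), and your final step ``constant quotients for all $\eta$ imply one-dimensionality'' is fine. The genuine gap is in your verification of \eqref{liouassum}, i.e.\ in the energy estimate. First, the bound you claim, $\sum_i\int_{B_R}|\nabla u_i|^2\le CR^{n-2}$, cannot be what is available here: if it held, \eqref{liouassum} with $F=\log$ would be satisfied for all $n\le 4$ and the theorem would not stop at $n\le 3$; your parenthetical that ``$F\in\mathcal F$ forces $n\le 3$'' is not correct, since with a numerator of order $R^{n-2}$ nothing forces $n\le 3$. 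The decay \eqref{decayn-2} was derived from the sign hypothesis $H_i\ge 0$ together with boundedness of $u$, neither of which is assumed in the present theorem. Second, the mechanism you propose as a substitute produces no gradient bound at all: testing \eqref{stability} with $\varphi\,\partial_n u_i$ and subtracting the integrated form of the differentiated equation $-\Delta(\partial_n u_i)=\sum_j\partial_j H_i(u)\,\partial_n u_j$ makes the terms $\int|\nabla(\partial_n u_i)|^2\varphi^2$ cancel identically between the two sides, and for a symmetric $H$-monotone system the nonlinear terms cancel as well (since $\sqrt{\partial_jH_i\,\partial_iH_j}\,\partial_nu_i\,\partial_nu_j=\partial_jH_i\,\partial_nu_i\,\partial_nu_j$ there); what survives is only the tautology $0\le\sum_i\int(\partial_n u_i)^2|\nabla\varphi|^2$. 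Even if your claimed inequality $\sum_i\int|\nabla(\partial_nu_i)|^2\varphi^2\le C\sum_i\int(\partial_nu_i)^2|\nabla\varphi|^2$ were available, it controls second derivatives, not the quantity $\int_{B_{2R}\setminus B_R}(\nabla u_i\cdot\eta)^2$ required in \eqref{liouassum}.

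What is actually needed, and what makes $n=3$ the threshold, is the estimate $\sum_i\int_{B_R}|\nabla u_i|^2\le CR^{n-1}$ for bounded $H$-monotone solutions. This does not follow from stability alone; it is proved, as in Ambrosio--Cabr\'e \cite{ac} and in \cite{fg}, by a sliding and energy-comparison argument: one considers the shifts $u^t(x)=u(x',x_n+t)$, identifies the monotone limits as $t\to\pm\infty$ as stable solutions of the $(n-1)$-dimensional problem, and compares the energy of $u$ on $B_R$ with that of suitable interpolants between the two profiles. This is where the ``orientable'' hypothesis in the statement enters (you never use it), and it also requires the boundedness of $u$, which you invoke implicitly to get $\|\nabla u\|_\infty<\infty$ but which is not among your stated assumptions. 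With the $R^{n-1}$ bound in hand, $R^{n-1}/(R^2\log R)$ is bounded precisely for $n\le 3$, and the remainder of your argument goes through.
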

\begin{proof} We omit the proof, since it is closely related to the proofs provided in \cite{fg}. 
\end{proof}

\subsection{Nonlinear Liouville theorems for radial solutions}

For radial stable solutions of symmetric systems (\ref{main}) the following Liouville theorem and pointwise estimates hold. Note that when $n\ge 1$, then we have $2-\frac{n}{2}+\sqrt{n-1}<0$ if and only if $n>10$. So, the dimension $n=10$ is the critical dimension as this is the case for the scalar equation, i. e. $m=1$.  The methods of proof that we apply here are strongly motivated by ideas given by Cabr\'{e}-Capella in  \cite{cc}  and Villegas in \cite{sv}. 

  \begin{thm} \label{radialbound}
 Suppose that $n \ge 2$, $m\ge 1$,  $H\in C^1(\mathbb R^m)$ and $u$ is a radial stable solution of symmetric system (\ref{main}). Then, there exist positive constants $r_0$ and $C_{n,m}$ such that 
 \begin{equation}\label{lower}
 \sum_{i=1}^{m} | u_i(r)| \ge C_{n,m}   \left\{
                      \begin{array}{ll}
                        r^{2-\frac{n}{2}+\sqrt{n-1}}, & \hbox{if $n \neq 10$,} \\
                      \log r, &  \hbox{if $n = 10$,} 
                           \end{array}
                    \right.
                    \end{equation}
                    where $r \ge r_0$ and $C_{n,m}$ is independent from $r$. In addition, assuming that each  $u_i$ is bounded  for $1\le i \le m$, then $n>10$ and  there is a constant $C_{n,m}$ such that 
  \begin{equation}\label{limit}
\sum_{i=1}^{m} | u_i(r) - u_i^\infty| \ge C_{n,m} r^{2-\frac{n}{2} +\sqrt{n-1}}, 
                    \end{equation}
                    where $r\ge 1$ and $u_i^\infty:=\lim_{r\to\infty} u_i{(r)}$ for each $i$.  
 \end{thm}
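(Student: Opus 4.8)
The plan is to mimic the Cabr\'e--Capella argument for the scalar case, using the $H$-independent stability inequality \eqref{Rstablesym} that is available for symmetric systems. Set $S(r):=\sum_{i=1}^m u_i'(r)^2$, which is a nonnegative radial function. The starting point is to extract from \eqref{Rstablesym} a differential/integral inequality for $S$ alone: choosing $\phi$ of the form $\phi(t)=t^{\alpha}\,\xi(t)$ with $\alpha$ to be optimized and $\xi$ a cut-off, the inequality \eqref{Rstablesym} becomes
\[
(n-1)\int_0^\infty S(t)\,\phi(t)^2 t^{n-3}\,dt \le \int_0^\infty S(t)\,\phi'(t)^2 t^{n-1}\,dt,
\]
which is formally the same inequality that governs $u'^2$ in the scalar radial case. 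Hence the weighted quantity $S$ obeys the same ``Hardy-type'' obstruction, and the critical exponent $\alpha=2-\frac n2+\sqrt{n-1}$ and critical dimension $n=10$ appear exactly as in \cite{cc,sv}.

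Next I would run the Cabr\'e--Capella monotonicity machinery on the auxiliary function
\[
\beta(r):=\Big(\int_{B_r} \sum_{i=1}^m u_i'(|x|)^2\,dx\Big)^{1/2}\quad\text{or, more conveniently, on}\quad I(r):=\int_0^r S(t)\,t^{n-1}\,dt.
\]
Testing \eqref{Rstablesym} against $\phi$ equal to a truncation of $t^{\alpha}$ and arguing as in the scalar proof yields, for $r$ large, a lower bound of the form $I(R)\ge c\,R^{2\alpha+n-2}$ when $n\neq 10$ (respectively a $\log$-type growth when $n=10$), because if $I$ grew more slowly the stability inequality would force $S\equiv 0$. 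From a lower bound on $I(r)$ one passes to a lower bound on $\sum_i|u_i(r)|$ by noting $\sum_i|u_i(r)-u_i(\rho)|\ge c\big(\int_\rho^r S(t)^{1/2}\,dt\big)$ after Cauchy--Schwarz relating $\int S^{1/2}$ to $I$ over dyadic annuli; choosing $\rho=r_0$ fixed and using that $u$ is nonconstant (otherwise \eqref{main} forces a trivial situation which is handled separately) gives \eqref{lower}. For \eqref{limit}, if each $u_i$ is bounded then $u_i^\infty$ exists, one repeats the estimate on the tail $\int_r^\infty S(t)^{1/2}\,dt$ controlling $\sum_i|u_i(r)-u_i^\infty|$, and the finiteness of $\sum_i|u_i^\infty-u_i(r_0)|$ together with the growth rate forced by stability is compatible only when $2\alpha+n-2<0$, i.e. $\alpha<0$, i.e. $n>10$; this simultaneously yields the dimension restriction and the decay rate $r^{2-\frac n2+\sqrt{n-1}}$.

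The main obstacle I anticipate is the passage from the scalar-type inequality for $S$ back to quantitative information about the vector $u=(u_i)_i$ itself: in the scalar case one directly compares $|u'|$ with $|u(r)-u(\rho)|$, but here $S=\sum_i u_i'^2$ controls $\sum_i|u_i(r)-u_i(\rho)|^2$ only through Cauchy--Schwarz on each component, and one must be careful that the constants $C_{n,m}$ genuinely depend only on $n$ and $m$ and not on the individual solution. A related subtlety is ruling out degenerate behaviour where several components conspire so that $S$ is small on large annuli without any single $u_i$ being constant; this is exactly where the dichotomy ``$S\equiv 0$ on a ball $\Rightarrow$ each $u_i$ constant there, then propagate by the ODE system \eqref{pderadial}'' must be invoked, as in the last theorem's proof. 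Once these points are handled, the rest is a routine transcription of the arguments in \cite{cc} and \cite{sv}, with \eqref{Rstablesym} playing the role of the scalar stability inequality and the factor $m$ entering only through the sums and the Cauchy--Schwarz constants.
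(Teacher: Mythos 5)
Your reduction to a scalar-type inequality for $S(t):=\sum_{i=1}^m u_i'(t)^2$ via \eqref{Rstablesym} is exactly the paper's starting point, and your treatment of the second assertion (boundedness forces the dyadic series to converge, hence the exponent $2-\frac n2+\sqrt{n-1}$ must be negative, i.e.\ $n>10$) is also the paper's. The gap is in the quantitative core. You propose to extract a \emph{lower} bound on $I(R)=\int_0^R S(t)t^{n-1}\,dt$ (which indeed follows from testing with a truncated power $t^{-\sqrt{n-1}}$, giving $\int_R^{2R}S\,t^{n-1}\,dt\ge cR^{2+2\sqrt{n-1}}$ when $S\not\equiv 0$ near the origin) and then to pass to a lower bound on $\sum_i|u_i(2r)-u_i(r)|$ ``by Cauchy--Schwarz relating $\int S^{1/2}$ to $I$ over dyadic annuli.'' That step goes in the wrong direction: Cauchy--Schwarz yields
\begin{equation*}
\int_R^{2R}S^{1/2}\,dt=\int_R^{2R}\bigl(S\,t^{n-1}\bigr)^{1/2}t^{-(n-1)/2}\,dt\le\Bigl(\int_R^{2R}S\,t^{n-1}\,dt\Bigr)^{1/2}\Bigl(\int_R^{2R}t^{1-n}\,dt\Bigr)^{1/2},
\end{equation*}
an \emph{upper} bound on $\int S^{1/2}$, and no lower bound is available from a lower bound on $\int S\,t^{n-1}$ alone: a spike of height $M$ on an interval of length $\varepsilon$ with $M\varepsilon$ fixed keeps $\int S\,t^{n-1}$ bounded below while $\int S^{1/2}\sim(M\varepsilon)^{1/2}\varepsilon^{1/2}\to0$.

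What actually closes this gap in the paper (following Villegas) is a different test function: on $[r,R]$ one takes $\phi$ proportional to the weighted harmonic extension $\int_t^R\frac{dz}{z^{n-1}S(z)}$, which is the minimizer of $\int\phi'^2S\,t^{n-1}$ with the given boundary values. This turns the stability inequality into an \emph{upper} bound on the capacity-type quantity $\int_r^R\frac{ds}{s^{n-1}S(s)}\le C_{n,m}r^{-2\sqrt{n-1}}$ (estimate \eqref{LR}), and then the three-exponent H\"older inequality \eqref{l=r},
\begin{equation*}
\int_r^Rs^{-\frac{n-1}{3}}\,ds\le\Bigl(\int_r^R\frac{ds}{s^{n-1}S}\Bigr)^{1/3}\Bigl(\int_r^RS^{1/2}\,ds\Bigr)^{2/3},
\end{equation*}
produces the genuine lower bound on $\int_r^{2r}S^{1/2}\le\sum_i\int_r^{2r}|u_i'|$ that yields \eqref{u2r}. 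Without this reversal (upper bound on the reciprocal integral rather than lower bound on $I$), the passage from stability to the pointwise growth of $\sum_i|u_i(r)|$ does not go through, so as written your argument is incomplete at precisely the step that carries the theorem. Your closing remarks about constants depending only on $n,m$ and about the degenerate case $S\equiv0$ are sound but secondary; note also that both your sketch and the paper implicitly assume $u$ is nonconstant so that $\int_0^1S\,t^{n-3}\,dt>0$.
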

 
\begin{proof} Suppose that  $u$ is a radial stable solution of  symmetric system (\ref{main}). Then apply  Lemma \ref{stabineqgen} for the following test function $\phi\in H^1(\mathbb{R}^+)\cap L^{\infty} (\mathbb{R}^+)$ 
$$\phi(t):=   \left\{
                      \begin{array}{ll}
                        1, & \hbox{if $0\le t \le 1$;} \\
                       t^{-\sqrt{n-1}}, & \hbox{if $1\le t \le r$;} \\
                        \frac{    r^{-\sqrt{n-1}}   }{  \int_{r}^{R} \frac{dz}{z^{n-1} \sum_{i=1}^{m} {u'_i}^2(z)  } }    \int_{t}^{R} \frac{dz}{z^{n-1}  \sum_{i=1}^{m}  {u'_i}^2(z)  }  , & \hbox{if $r\le t \le R$;} \\
                          0, & \hbox{if $R\le t $,} 
                           \end{array}
                    \right.$$
                    for any $1\le r\le R$.     By straightforward calculations for the given test function $\phi$, the left-hand side of (\ref{Rstablesym}) has the following lower bound,
\begin{equation}\label{LHS}
(n-1)  \int_{0}^{1} \sum_{i=1}^{m} {u'_i}^2(t)   t^{n-3} dt + (n-1)  \int_{1}^{r} \sum_{i=1}^{m} {u'_i}^2(t)   t^{   -2 \sqrt{n-1} +n-3} dt.
\end{equation}
On the other hand, since 
$$\phi'(t)=   \left\{
                      \begin{array}{ll}
                        0, & \hbox{if $0\le t <1$;} \\
                      -\sqrt{n-1} t^{-\sqrt{n-1}-1}, & \hbox{if $1< t < r$;} \\
                     - \frac{    r^{-\sqrt{n-1}}   }{  \int_{r}^{R} \frac{dz}{z^{n-1} \sum_{i=1}^{m} {u'_i}^2(z)  } }     \frac{1}{t^{n-1}  \sum_{i=1}^{m}  {u'_i}^2(t)  }  , & \hbox{if $r\le t \le R$;} \\
                          0, & \hbox{if $R\le t $,} \\
                           \end{array}
                    \right.$$           
the right-hand side of (\ref{Rstablesym}) is the same as the following
\begin{equation}\label{RHS}
(n-1) \int_{1}^{r}   t^{  -2\sqrt{n-1}+ n-3}  \sum_{i=1}^{m} {u'_i}^2(t)  dt +  \frac{    r^{-2\sqrt{n-1}   } }{  \int_{r}^{R} \frac{dz}{z^{n-1}\sum_{i=1}^{m} {u'_i}^2(z)   }  } .
\end{equation}
 Hence, equating (\ref{LHS}) and (\ref{RHS}) we get  
\begin{equation}\label{LR}
\int_{r}^{R} \frac{ds}{s^{n-1}\sum_{i=1}^{m} {u'_i}^2(s)   }  \le C_{n,m}  r^{-2\sqrt{n-1}}   \ \ \ \ \forall 1\le r\le R,
\end{equation}
where  $C_{n,m}:=\left((n-1)  \int_{0}^{1} \sum_{i=1}^{m} {u'_i}^2(t)   t^{n-3} dt\right)^{-1} $. Note that the constant $C_{n,m}$ does not depend on $r,R$.   Applying the H\"{o}lder's inequality we obtain 
\begin{eqnarray}\label{l=r}
\ \ \ \ \ \  \int_r^R \frac{ds}{s^{\frac{n-1}{3}}} &= &\int_r^R \frac{\left(  \sum_{i=1}^{m} {u'_i}^2(s)   \right)^{1/3} }{s^{\frac{n-1}{3}} \left(  \sum_{i=1}^{m} {u'_i}^2(s)   \right)^{1/3}  } ds  
\\ & \le& \nonumber
 \left( \int_r^R  \frac{ds}{  s^{n-1}   \sum_{i=1}^{m} {u'_i}^2(s)   }         \right)^{1/3}   \left( \int_r^R   \left(\sum_{i=1}^{m} {u'_i}^2(s)     \right)^{1/2}    ds  \right)^{2/3} .   
\end{eqnarray}
From (\ref{LR}) and the fact that $||z||_{l^2}\le ||z||_{l^1}$ for any $z\in \mathbb R^m$, we have
\begin{equation}\label{l=r2}
\int_r^R \frac{ds}{s^{\frac{n-1}{3}}} \le C_{n,m} r^{-\frac{2}{3}\sqrt{n-1}}    \left(  \sum_{i=1}^m \int_r^R    |u'_i(s) | ds \right)^{2/3} .
\end{equation}
 Computing the integral in the left-hand side of (\ref{l=r2}) and taking $R=2r$, for any $n\ge 2$,  we get 
\begin{equation}\label{u2r}
 \sum_{i=1}^m | u_i(2r)-u_i(r)| \ge C_{n,m} r^{2-\frac{n}{2}+\sqrt{n-1}}.
\end{equation}
Finally assuming that $u$ is bounded, from (\ref{u2r}) we conclude 
 \begin{equation}\label{}
\sum_{i=1}^{m} |u_i(r)- u_i^\infty| = \sum_{i=1}^m \sum_{k=1}^\infty  | u_i(2^k r)-u_i(2^{k-1}r)| \ge C\sum_{k=1}^\infty  (2^{k-1}r)^{2-\frac{n}{2}+\sqrt{n-1}}.
\end{equation}
 This proves the second part of the theorem that is (\ref{limit}) and $n>10$.  To prove the first part of the theorem that is (\ref{lower}),  without loss of generality,  we assume that $2\le n\le 10$.    Define $r=2^{k-1}r_1$ where $1\le r_1<2$. Therefore, 
\begin{eqnarray*}\label{}
\sum_{i=1}^{m} |u_i(r)| &=& \sum_{i=1}^m  | u_i(r)-u_i(r_1)| - \sum_{i=1}^m |u_i(r_1)| 
\\&=&\sum_{i=1}^m \sum_{j=1}^{k-1}  | u_i(2^j r_1)-u_i(2^{j-1}r_1)| - \sum_{i=1}^m |u_i(r_1)| \\&\ge&  C_{n,m} \sum_{i=1}^m \sum_{j=1}^{k-1} (2^{j-1}r_1)^{  {2-\frac{n}{2}+\sqrt{n-1}}  } -\sum_{i=1}^m |u_i(r_1)|.
\end{eqnarray*}
This shows (\ref{lower}) for the case of $2\le n<10$. From the above inequality, in dimension $n=10$, we have 
\begin{eqnarray*}\label{}
\sum_{i=1}^{m} |u_i(r)| \ge C_{n,m}  (k-1)  - \sum_{i=1}^m |u_i(r_1)|.
\end{eqnarray*}
The fact that $k-1=\frac{\log r - \log r_1}{\log 2}$ finishes the proof. 

\end{proof}

\section{Regularity results for symmetric systems}\label{sReg}
     
In this section, we consider system (\ref{mainom}) when $\Omega=B_1$ where $B_1$ is the unit ball.   Similar to the unbounded case, i. e. (\ref{stabindep}), a stable solution $u=(u_i)_i$ of system (\ref{mainom}) when $\Omega=B_1$  satisfies the following inequality   
  \begin{eqnarray} \label{stablesym}
\nonumber (n-1) \sum_{i=1}^{m} \int_{B_1}  \frac{u'^2_i}{\lambda_i} \phi^2 &\le& \sum_{i=1}^{m} \int_{B_1}  \frac{u'^2_i}{\lambda_i}   |\nabla (r\phi)|^2  \\ &
& +    \sum_{i,j=1}^{m} \int_{B_1}   \left( \partial_j H_i(u)  -\sqrt{\partial_i H_j(u)\partial_j H_i(u)}   \right)  u'_i u'_j (r\phi)^2
\end{eqnarray} 
   for all $ \phi \in C^{0,1}(B_1) \cap H_0^1(B_1)$. In addition, for symmetric systems the following inequality holds 
    \begin{eqnarray} \label{stablesymb}
 (n-1) \sum_{i=1}^{m} \int_{B_1}  \frac{u'^2_i}{\lambda_i} \phi^2 \le \sum_{i=1}^{m} \int_{B_1}  \frac{u'^2_i}{\lambda_i}   |\nabla (r\phi)|^2  
\end{eqnarray} 
   for all $ \phi \in C^{0,1}(B_1) \cap H_0^1(B_1)$.  The fact that $H$ does not appear in this inequality enables us to show that the following regularity result  holds for radial stable solutions of  symmetric system (\ref{main}).   Note that similar results for the scaler case are provided in \cite{v}. 
 \begin{thm} \label{radial}
  Suppose that $n \ge 2$, $m\ge 1$ and $ u=(u_i)_i\in H^1(B_1)$ denotes a radial stable   solution of symmetric system  (\ref{mainom}) where $\Omega=B_1$. Then,  for any $r\in (0,1]$ we have
 \begin{enumerate}
 \item[(i)]  $\sum_{i=1}^{m} \frac{|u_i(r)|}{\sqrt\lambda_i} \le C_{n,m} \sum_{i=1}^{m}\frac{1}{\sqrt{\lambda_i}} ||u_i||_{H^1(B_1\setminus \overline{B_{1/2}})}$,  provided $n<10$,
 \item[(ii)]  $\sum_{i=1}^{m} \frac{|u_i(r)|}{\sqrt\lambda_i} \le C_{n,m}  (1+|\log r|) \sum_{i=1}^{m}\frac{1}{\sqrt{\lambda_i}} ||u_i||_{H^1(B_1\setminus \overline{B_{1/2}})}  $, provided $n=10$,
 \item[(iii)]  $\sum_{i=1}^{m} \frac{|u_i(r)|}{\sqrt\lambda_i} \le C_{n,m} r^{-\frac{n}{2}+\sqrt{n-1}+2}  \sum_{i=1}^{m}\frac{1}{\sqrt{\lambda_i}} ||u_i||_{H^1(B_1\setminus \overline{B_{1/2}})}  $, provided $n>10$,
 \end{enumerate}
 where $C_{n,m}$ is a positive constant independent from $r$. 
\end{thm}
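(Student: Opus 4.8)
The plan is to run the scheme of the proof of Theorem~\ref{radialbound} in a bounded setting, using the symmetric stability inequality (\ref{stablesymb}) in place of (\ref{Rstablesym}), and to convert a quantitative energy--decay estimate into the three pointwise bounds by a dyadic summation in which the dimension $n=10$ appears as the borderline (the same exponent $2-\tfrac n2+\sqrt{n-1}$ as in Theorem~\ref{radialbound}).

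First I would rewrite (\ref{stablesymb}) in radial form: passing to polar coordinates and setting $\chi(t):=t\,\phi(t)$, it becomes
\[
(n-1)\int_0^1 g(t)\,\chi(t)^2\,t^{n-3}\,dt \;\le\; \int_0^1 g(t)\,\chi'(t)^2\,t^{n-1}\,dt ,
\qquad g(t):=\sum_{i=1}^m \frac{u_i'(t)^2}{\lambda_i},
\]
valid for every Lipschitz $\chi$ on $[0,1]$ with $\chi(t)/t$ bounded and $\chi(1)=0$. For a fixed radius $0<r<\tfrac12$ I would test this with the explicit, admissible function
\[
\chi(t)=\begin{cases}
t, & 0\le t\le r,\\[1mm]
r^{\,1+\sqrt{n-1}}\,t^{-\sqrt{n-1}}, & r\le t\le \tfrac12,\\[1mm]
2\,\chi(\tfrac12)\,(1-t), & \tfrac12\le t\le 1 .
\end{cases}
\]
On $[0,r]$ the right-- and left--hand integrands are $g\,t^{n-1}$ and $(n-1)g\,t^{n-1}$, so this interval contributes $(n-2)\int_0^r g\,t^{n-1}\,dt$ with the favorable sign; the exponent $-\sqrt{n-1}$ on $[r,\tfrac12]$ is chosen precisely so that the two integrands coincide there and cancel; and on $[\tfrac12,1]$, using $\chi(\tfrac12)^2=2^{2\sqrt{n-1}}r^{\,2+2\sqrt{n-1}}$ together with $\int_{1/2}^1 g(t)\,t^{n-1}\,dt \le C_n\sum_i \lambda_i^{-1}\|u_i\|_{H^1(B_1\setminus\overline{B_{1/2}})}^2$, the remaining contribution is bounded in absolute value by $C_n\,r^{\,2+2\sqrt{n-1}}A$, where $A:=\sum_{i=1}^m \lambda_i^{-1}\|u_i\|_{H^1(B_1\setminus\overline{B_{1/2}})}^2$. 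Collecting the three intervals (and using $n-2>0$) would yield the key decay estimate
\[
I(r):=\int_0^r g(t)\,t^{n-1}\,dt \;\le\; C_n\, r^{\,2+2\sqrt{n-1}}\,A, \qquad 0<r\le \tfrac12 .
\]

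Next I would convert this into the stated bounds. Since $u_i(1)=0$ and $\sum_i \lambda_i^{-1/2}|u_i'(s)|\le \sqrt m\,g(s)^{1/2}$, it suffices to control $\int_r^1 g^{1/2}$. The part $\int_{1/2}^1 g^{1/2}$ is $\le C_nA^{1/2}$ by one Cauchy--Schwarz. On $[r,\tfrac12]$ I would split dyadically into annuli $[2^kr,2^{k+1}r]$ and estimate, on each,
\begin{align*}
\int_{2^kr}^{2^{k+1}r} g^{1/2}\,ds &\le \Big(\int_{2^kr}^{2^{k+1}r} g\,s^{n-1}\,ds\Big)^{1/2}\Big(\int_{2^kr}^{2^{k+1}r} s^{1-n}\,ds\Big)^{1/2}\\
&\le C_n\,I(2^{k+1}r)^{1/2}(2^kr)^{\frac{2-n}{2}} \;\le\; C_n\,(2^kr)^{\frac{4+2\sqrt{n-1}-n}{2}}A^{1/2}.
\end{align*}
Summing the resulting geometric series, the outcome is dictated by the sign of $4+2\sqrt{n-1}-n$, which is positive for $2\le n<10$, zero for $n=10$, and negative for $n>10$: in the first case the sum collapses to $C_nA^{1/2}$, in the second it produces a factor comparable to $1+|\log r|$, and in the third it converges with the $k=0$ term dominant, giving $C_n\,r^{\,2+\sqrt{n-1}-n/2}A^{1/2}$. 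Inserting these into $\sum_i \lambda_i^{-1/2}|u_i(r)|\le \sqrt m\int_r^1 g^{1/2}$, together with $A^{1/2}\le \sum_i \lambda_i^{-1/2}\|u_i\|_{H^1(B_1\setminus\overline{B_{1/2}})}$, gives (i)--(iii) for $0<r\le\tfrac12$; for $\tfrac12\le r\le 1$ the estimate is immediate from $|u_i(r)|\le \int_{1/2}^1|u_i'|\le C_n\|u_i\|_{H^1(B_1\setminus\overline{B_{1/2}})}$, since each of the three weights $1$, $1+|\log r|$, $r^{-n/2+\sqrt{n-1}+2}$ is then bounded above and below.

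The hard part will be the first step: one must insert the exponent $\sqrt{n-1}$ on the middle interval so that the cross term cancels exactly, match the junctions so that $\phi=\chi/t$ is genuinely Lipschitz up to $\partial B_1$, and organize the contribution of $[\tfrac12,1]$ so that it reproduces precisely the $H^1$--norm over the annulus $B_1\setminus\overline{B_{1/2}}$ with the correct power of $r$; once the decay estimate for $I$ is secured, the remainder is routine, with the threshold $n=10$ emerging automatically from the geometric sum. The case $n=2$ is borderline for this choice of $\chi$ (the $[0,r]$ piece then contributes nothing, since $n-2=0$) and would be treated by a minor variant using a logarithmic cut--off, exactly as in the scalar setting of \cite{v}.
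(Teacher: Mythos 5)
Your proposal is correct and follows essentially the same route as the paper: the same test function (yours is the paper's $\phi$ rescaled by $r^{\sqrt{n-1}+1}$ and written as $\chi=t\phi$), the same exact cancellation on the middle interval, the same decay estimate $\int_0^r\sum_i\lambda_i^{-1}u_i'^2\,t^{n-1}dt\le C_nr^{2+2\sqrt{n-1}}A$, and the same dyadic summation with $n=10$ as the borderline exponent. The only differences are cosmetic (using $u_i(1)=0$ instead of an $L^\infty$--$H^1$ bound on the outer annulus, and phrasing the telescoping as $\int_r^1 g^{1/2}$), and your explicit flag of the degenerate case $n=2$ is if anything more careful than the paper's treatment.
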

\begin{proof} Let $u=(u_i)_i$ be a radial stable solution of symmetric system (\ref{mainom}). Set the test function $\phi(|x|)$ to be the following for a fixed $r>0$
    \begin{eqnarray*} 
\phi(t)= \left\{ \begin{array}{lcl}
\hfill   r^{-\sqrt{n-1}-1}   \quad && \text{if}\  0\le t\le r,\\ 
\hfill t^{-\sqrt{n-1}-1}   \quad && \text{if}\  r< t\le 1/2, \\
\hfill 2^{\sqrt{n-1}+2} (1-t)  \quad && \text{if}\  1/2< t\le 1.
\end{array}\right.
  \end{eqnarray*}
  Note that the following stability inequality holds,
  \begin{equation}\label{stabphi}
  (n-1) \sum_{i=1}^{m} \int_0^1 \frac{{u'_i}^2(t)}{\lambda_i} \phi^2(t) t^{n-1} dt   \le \sum_{i=1}^{m}  \int_0^1 \frac{{u'_i}^2(t)}{\lambda_i} (t\phi(t))'^2 t^{n-1} dt .
  \end{equation}
  Substitute $ \phi$ into  (\ref{stabphi}) and suppose that $0<r < \frac{1}{2}$.  Then from the fact that $(n-1)\phi^2(t)=(t\phi(t))'^2 $ for $r<t<1/2$ we get  
        \begin{eqnarray} 
\nonumber \int_0^r    \psi_n(t)  \sum_{i=1}^{m}  \frac{{u'_i}^2(t)}{\lambda_i} t^{n-1} dt  & \le& - \int_{1/2}^1 \left(  (n-1)\phi^2(t) -  (t\phi(t))'^2  \right) \sum_{i=1}^{m} \frac{{u'_i}^2(t)}{\lambda_i}  t^{n-1} dt 
 \\&\le & C_n \int_{1/2}^1  \sum_{i=1}^{m} \frac{{u'_i}^2(t)}{\lambda_i}  t^{n-1} dt 
      \end{eqnarray}
where $\psi_n(t):= \left(  (n-1)\phi^2(t) -  (t\phi(t))'^2  \right)  $ and $C_n=|| \psi_n(t)||_{L^\infty   ([1/2,1])}$.  Note that for $t\in [0,r]$ direct calculations show that   $\psi_n(t)=(n-2)  r^{-2\sqrt{n-1}-2}$. Therefore, 
     \begin{eqnarray} \label{bound}
 \int_0^r  \sum_{i=1}^{m}  \frac{{u'_i}^2(t)}{\lambda_i} t^{n-1} dt   \le  C_n r^{2\sqrt{n-1}+2} \int_{1/2}^1  \sum_{i=1}^{m} \frac{{u'_i}^2(t)}{\lambda_i}  t^{n-1} dt 
      \end{eqnarray}
 provided    $0<r < \frac{1}{2}$ and $n>2$.  Similarly one can show that for all $0<r < 1$ and $n \ge 2$, estimate (\ref{bound}) holds by taking the constant $ C_n$ sufficiently large if necessary. From (\ref{bound}) and by a  direct calculation for any $r\in(0,1]$ and $n\ge2$ we get
\begin{eqnarray*} 
\sum_{i=1}^{m} \frac{1}{\sqrt{\lambda_i}} |u_i(r)-u_i(\frac{r}{2})| &\le& \int_{r/2}^{r}  \sum_{i=1}^{m} \frac{1}{\sqrt\lambda_i}  |u'_i(t)|  t^{\frac{n-1}{2}} t^{\frac{1-n}{2}} dt \\ &\le& C_{n,m} \left(  \int_{r/2}^{r} 
 \sum_{i=1}^{m} \frac{1}{\lambda_i}  u'_i(t)^2  
 t^{n-1} dt  \right)^{1/2} \left( \int_{r/2}^{r} t^{1-n} dt  \right)^{1/2} \\&\le& C_{n,m}   r^{\sqrt{n-1}+2-\frac{n}{2}}   \sum_{i=1}^{m} \frac{1}{\sqrt\lambda_i} ||\nabla u_i||_{L^2(B_1\setminus \overline{B_{1/2}})} ,
    \end{eqnarray*}
where $ C_{n,m} $ only depends on $n$ and $m$.  Now, let $0<r \le 1$ so there exist $k\in \mathbb N$ and $1/2<r_1\le 1$ such that $r =\frac{r_1}{2^{k-1}}$. The fact that $u=(u_i)_i$ is a radial solution, we have $|u_i(r_1)|\le ||u_i||_{L^\infty(B_1\setminus \overline{B_{1/2}})} \le C_{i,n} ||u_i||_{H^1(B_1\setminus \overline{B_{1/2}})} $ for all $i=1,\cdots,m$. So, 
\begin{eqnarray*} 
\sum_{i=1}^{m} \frac{1}{\sqrt{\lambda_i}} |u_i(r)| &\le&  \sum_{i=1}^{m} \frac{1}{\sqrt{\lambda_i}} |u_i(r)-u_i(r_1)| + \sum_{i=1}^{m} \frac{1}{\sqrt{\lambda_i}} |u_i(r_1)| \\&\le&   \sum_{i=1}^{m} \frac{1}{\sqrt{\lambda_i}}   \sum_{j=1}^{k-1}   |u_i(\frac{r_1}{2^{j-1}})-u_i(\frac{r_1}{2^j})| +C_{n,m} \sum_{i=1}^{m}\frac{1}{\sqrt{\lambda_i}} ||u_i||_{H^1(B_1\setminus \overline{B_{1/2}})} 
\\&\le& C_{n,m} \sum_{j=1}^{k-1} \left(\frac{r_1}{2^{j-1}}\right)^{-n/2+\sqrt{n-1} +2}     \sum_{i=1}^{m} \frac{1}{\sqrt{\lambda_i}} ||\nabla u_i||_{L^2(B_1\setminus \overline{B_{1/2}})} \\&&+ C_{n,m} \sum_{i=1}^{m}\frac{1}{\sqrt{\lambda_i}} ||u_i||_{H^1(B_1\setminus \overline{B_{1/2}})} 
\\&\le& C_{n,m}  \left(   \sum_{j=1}^{k-1} \left(\frac{r_1}{2^{j-1}}\right)^{-n/2+\sqrt{n-1} +2}   +1     \right)\sum_{i=1}^{m}\frac{1}{\sqrt{\lambda_i}} ||u_i||_{H^1(B_1\setminus \overline{B_{1/2}})} . 
   \end{eqnarray*}
Note that the sign of ${\sqrt{n-1}+2-\frac{n}{2}}$ is crucial in deriving the estimates. Note that  $\sqrt{n-1}+2-\frac{n}{2}=0$ if and only if $n=10$. Therefore, the dimension $n=10$ is the critical dimension.  From the above, for any $0<r\le 1$ we get 
 \begin{equation*} 
\sum_{i=1}^{m} \frac{1}{\sqrt{\lambda_i}} |u_i(r)| \le C_n \sum_{i=1}^{m}\frac{1}{\sqrt{\lambda_i}} ||u_i||_{H^1(B_1\setminus \overline{B_{1/2}})} ,
   \end{equation*} 
   provided  $2\le n<10$.  Note that we have used the fact that $1/2<r_1\le 1$ and  also $\sum_{j=1}^{\infty} \left(\frac{1}{2^{j-1}}\right)^{-n/2+\sqrt{n-1} +2} $ is convergent when $ n<10$.  If $n=10$, then 
   \begin{equation*} 
\sum_{i=1}^{m} \frac{1}{\sqrt{\lambda_i}} |u_i(r)| \le C_n k \sum_{i=1}^{m}\frac{1}{\sqrt{\lambda_i}} ||u_i||_{H^1(B_1\setminus \overline{B_{1/2}})}.
    \end{equation*}
From the definition of $k$ we have $k=\frac{\log r_1-\log r}{\log 2}+1$. Therefore, 
     \begin{equation*} 
\sum_{i=1}^{m} \frac{1}{\sqrt{\lambda_i}} |u_i(r)| \le C_n (1+|\log r|) \sum_{i=1}^{m}\frac{1}{\sqrt{\lambda_i}} ||u_i||_{H^1(B_1\setminus \overline{B_{1/2}})},
    \end{equation*}
where $C_n $ is a large enough constant and depends only on $n$.      Finally, when $n>10$ we have
\begin{equation*} 
 \sum_{j=1}^{k-1} \left(\frac{r_1}{2^{j-1}}\right)^{-n/2+\sqrt{n-1} +2}  =C_n \left(r^{-n/2+\sqrt{n-1} +2} -r_1^{-n/2+\sqrt{n-1} +2} \right). 
    \end{equation*}
Therefore,
\begin{equation*} 
\sum_{i=1}^{m} \frac{1}{\sqrt{\lambda_i}} |u_i(r)| \le C_n r^{-\frac{n}{2} +\sqrt{n-1}+2}  \sum_{i=1}^{m}\frac{1}{\sqrt{\lambda_i}} ||u_i||_{H^1(B_1\setminus \overline{B_{1/2}})} ,
 \end{equation*}
 where $r\in (0,1]$. This finishes the proof. 
      
\end{proof}  
Making an assumption on the sign of the nonlinearity $H(u)$ and its derivatives,  we can prove the following pointwise estimates for derivatives of radial stable solutions of the symmetric system (\ref{main}).  

\begin{thm} \label{highradial}
 Let $\Omega = B_1$, $m\ge 1$ and $n \ge 2$. Suppose that for each $1\le i\le m$, $u_i\in H^1(B_1)$ is decreasing and $ u=(u_i)_i$ is a stable radial solution of symmetric system (\ref{mainom}) where  $H_i(u)\ge 0$.  Then the following estimate holds for $r\in(0,1/2]$, 
      \begin{equation}\label{u'}
\sum_{i=1}^{m} \frac{|u'_i(r)|}{\sqrt\lambda_i}  \le C_{n,m} r^{-\frac{n}{2}+\sqrt{n-1}+1}  \sum_{i=1}^{m}\frac{1}{\sqrt{\lambda_i}} ||  \nabla u_i||_{L^2(B_1\setminus \overline{B_{1/2}})}.
   \end{equation}
Moreover, if $\partial_j H_i(u)\ge 0$ where $i,j=1,\cdots,m$, then 
   \begin{equation}\label{u''}
   \sum_{i=1}^{m} \frac{|u''_i(r)|}{\sqrt\lambda_i}  \le C_{n,m} r^{-\frac{n}{2}+\sqrt{n-1}}  \sum_{i=1}^{m}\frac{1}{\sqrt{\lambda_i}} ||  \nabla u_i||_{L^2(B_1\setminus \overline{B_{1/2}})},
   \end{equation}
 where $C_{n,m}$ is a positive constant independent from $r$. 
\end{thm}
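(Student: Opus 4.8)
The plan is to adapt the bootstrap argument of Theorem \ref{radial} to the derivatives $u_i'$ and $u_i''$, using the equation and monotonicity to convert information about $u_i'$ near the boundary into pointwise bounds near the origin. First I would record the consequence of Theorem \ref{radial} (and the estimate (\ref{bound}) obtained in its proof) that for radial stable solutions of the symmetric system one has $\int_0^r \sum_i \lambda_i^{-1}{u_i'}^2(t)\,t^{n-1}\,dt \le C_n r^{2\sqrt{n-1}+2}\sum_i \lambda_i^{-1}\|\nabla u_i\|_{L^2(B_1\setminus\overline{B_{1/2}})}^2$ for all $r\in(0,1)$. Then I would exploit that $-({u_i'})$ satisfies the linearized radial equation (\ref{pderadial}), $-u_i'' - \frac{n-1}{t}u_i' + \frac{n-1}{t^2}u_i' = \sum_j \partial_j H_i(u) u_j'$, together with the hypotheses $u_i'\le 0$, $H_i\ge 0$ (hence $-\Delta u_i = \lambda_i H_i \ge 0$, so $(t^{n-1}u_i')'\le 0$ and $t\mapsto t^{n-1}u_i'(t)$ is nonincreasing). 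Since $u_i'(1^-)$ is finite and $u_i'\le0$, this monotonicity gives $|u_i'(r)|\le C r^{-(n-1)}\int_{1/2}^1 |u_i'(t)|\,t^{n-1}\,dt$ — but that blows up too fast; instead one should feed the $L^2$-in-weighted-measure bound into this. The cleaner route: from $(t^{n-1}u_i'(t))' \le 0$ one gets, for $0<r\le s\le 1/2$, that $s^{n-1}|u_i'(s)| \le r^{n-1}|u_i'(r)|$ is the wrong direction; rather $r^{n-1}|u_i'(r)| \le s^{n-1}|u_i'(s)|$, so averaging $s$ over $[r,2r]$ and using Cauchy–Schwarz with the measure $t^{n-1}dt$ against the weighted-$L^2$ bound yields $r^{n-1}|u_i'(r)| \le C\,(r^{n-1}\!\int_r^{2r}{u_i'}^2 t^{n-1}dt)^{1/2}\cdot(\cdots)^{1/2}$; plugging in (\ref{bound}) with radius $2r$ produces exactly $|u_i'(r)|\le C_{n,m} r^{-\frac n2+\sqrt{n-1}+1}\sum_i \lambda_i^{-1/2}\|\nabla u_i\|_{L^2(B_1\setminus\overline{B_{1/2}})}$ after summing the $\lambda_i^{-1/2}$-weighted version and using $\|z\|_{\ell^2}\le\|z\|_{\ell^1}$, which is (\ref{u'}).

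For the second-order estimate (\ref{u''}) I would differentiate once more, or more simply use the radial ODE itself in the form $u_i''(r) = -\frac{n-1}{r}u_i'(r) - \lambda_i H_i(u(r))$ directly from (\ref{main}). The term $\frac{n-1}{r}u_i'(r)$ is controlled by (\ref{u'}) and contributes exactly the claimed power $r^{-\frac n2+\sqrt{n-1}}$. The remaining term $\lambda_i H_i(u(r))$ must be bounded by the same quantity; here is where the extra hypothesis $\partial_j H_i(u)\ge 0$ enters. Under this sign condition, differentiating (\ref{main}) and using $u_j'\le 0$ gives $-\Delta u_i' + \frac{n-1}{r^2}u_i' = \sum_j \partial_j H_i(u)u_j' \le 0$, i.e. $t^{n-1}u_i'(t)/\cdots$; more usefully, $\lambda_i H_i(u(r)) = -u_i''(r) - \frac{n-1}{r}u_i'(r)$ and monotonicity of $\lambda_i H_i(u(r))$ in $r$ (which follows from $\partial_j H_i\ge0$ and $u_j$ decreasing: $\frac{d}{dr}H_i(u(r)) = \sum_j\partial_j H_i(u)u_j'(r)\le 0$, so $H_i(u(\cdot))$ is nonincreasing) lets me compare $H_i(u(r))$ to an average of $H_i(u(t))$ over $t\in[r,2r]$, then write $H_i(u(t)) = \lambda_i^{-1}(-\Delta u_i) = -\lambda_i^{-1}t^{1-n}(t^{n-1}u_i'(t))'$ and integrate by parts against the weighted measure to reduce everything to the already-established $L^2$ bound on $u_i'$. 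Assembling the two contributions to $u_i''(r)$ gives (\ref{u''}).

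The main obstacle I anticipate is handling the term $\lambda_i H_i(u(r))$ in the pointwise bound for $u_i''$: unlike $u_i'$, it is not directly the solution of a linear equation to which the stability inequality applies, so one genuinely needs the auxiliary sign hypotheses $H_i\ge0$ and $\partial_j H_i\ge0$ to get monotonicity of $r\mapsto H_i(u(r))$ and thereby trade the pointwise value at $r$ for a weighted average near $r$ that can be estimated by (\ref{bound}). A secondary technical point is that the monotonicity $(t^{n-1}u_i'(t))'\le0$ only gives one-sided control, so care is needed to combine it with the $L^2$-Cauchy–Schwarz step in the correct direction; I would state the elementary lemma that if $g\ge0$ is such that $t^{n-1}g(t)$ is nonincreasing on $(0,1/2]$ then $r^{n-1}g(r)\le C_n\big(r^{n-1}\int_r^{2r} g(t)^2 t^{n-1}\,dt\big)^{1/2}$ and apply it with $g=|u_i'|$ and then with $g=H_i(u(\cdot))$, and then everything reduces to plugging in the weighted energy estimate (\ref{bound}).
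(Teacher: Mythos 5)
Your proof of the first estimate \eqref{u'} is essentially the paper's own argument: both rest on the observation that $(-t^{n-1}u_i'(t))'=\lambda_i t^{n-1}H_i(u)\ge 0$, so that $t^{n-1}|u_i'(t)|$ is nondecreasing, whence $r^{n}|u_i'(r)|^2\le C_n\int_r^{2r}{u_i'}^2\,t^{n-1}\,dt$, and then the weighted energy decay \eqref{bound} from the proof of Theorem~\ref{radial} at radius $2r$ gives the exponent $-\tfrac n2+\sqrt{n-1}+1$. (Be careful, though: in your closing ``elementary lemma'' you state the hypothesis as $t^{n-1}g(t)$ \emph{nonincreasing}, which is the wrong direction for $g=|u_i'|$ and contradicts the correct inequality $r^{n-1}|u_i'(r)|\le s^{n-1}|u_i'(s)|$ that you derive earlier; also the normalization $r^{n-1}g(r)\le C\bigl(r^{n-1}\int_r^{2r}g^2t^{n-1}\,dt\bigr)^{1/2}$ is off — the correct form is $g(r)\le C r^{-n/2}\bigl(\int_r^{2r}g^2t^{n-1}\,dt\bigr)^{1/2}$.) For the second estimate \eqref{u''} your route genuinely differs from the paper's: you decompose $u_i''=-\tfrac{n-1}{r}u_i'-\lambda_iH_i(u)$ and control $\lambda_iH_i(u(r))$ by combining the monotonicity of $r\mapsto H_i(u(r))$ (from $\partial_jH_i\ge0$, $u_j'\le0$) with the exact identity $\int\lambda_iH_i\,t^{n-1}\,dt=-t^{n-1}u_i'\big|$, whereas the paper substitutes $v_i(r)=-nr^{1-1/n}u_i'(r^{1/n})$, shows $v_i$ is nonnegative, nondecreasing and concave, and reads off $v_i'\le v_i/r$. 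Both arguments land on the same intermediate bound $|u_i''(r)|\le C_n|u_i'(r)|/r$, after which \eqref{u''} follows from \eqref{u'}; yours is arguably more transparent, the paper's avoids any averaging. One direction error to fix in your version: since $H_i(u(\cdot))$ is \emph{nonincreasing}, comparing $H_i(u(r))$ to its average over $[r,2r]$ gives only a lower bound; you must average over $[r/2,r]$, where $H_i(u(t))\ge H_i(u(r))$, so that $\lambda_iH_i(u(r))\int_{r/2}^{r}t^{n-1}\,dt\le\int_{r/2}^{r}\lambda_iH_i(u(t))t^{n-1}\,dt\le r^{n-1}|u_i'(r)|$, which yields $\lambda_iH_i(u(r))\le C_n|u_i'(r)|/r$ as needed. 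With these two sign/direction corrections the plan goes through.
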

 \begin{proof}  Note that the radial function $u_i$ satisfies $(-r^{n-1} u'_i(r))'=\lambda_i H_i(u)\ge 0$. From this and the fact that $u_i$ is decreasing, we have $-r^{n-1} u'_i(r)$ is positive and nondecreasing. Moreover, the radial function $r^{2n-2} (u'_i(r))^2$ is positive and nondecreasing as well.  So, for any $1\le i\le m$ we get
 \begin{eqnarray*} 
\int_0^{2r}  t^{n-1} ({u'_i}(t))^2 dt &\ge&  \int_r^{2r}  t^{n-1}  ({u'_i}(t))^2 dt  =  \int_r^{2r}  t^{2n-2}  ({u'_i}(t))^2 t^{1-n }dt \\&\ge& r^{2n-2} ({u'_i}(r))^2   \int_r^{2r} t^{1-n }dt \ge C_n  r^{n}   ( {u'_i}(r))^2,  
\end{eqnarray*} 
that gives us the following upper bound 
 \begin{equation*} 
|u'_i(r)| \le C_n r^{-n/2} \left(   \int_0^{2r}  t^{n-1} ({u'_i}(t))^2  dt    \right)^{1/2}.
 \end{equation*}
Taking sum on all values of $1\le i\le m$ we get 
\begin{eqnarray*} 
\sum_{i=1}^{m}   \frac{|u'_i(r)|}{\sqrt{\lambda_i}} &\le& C_n r^{-n/2} \sum_{i=1}^{m}  \left(   \int_0^{2r}  t^{n-1}  \frac{ ({u'_i}(t))^2 }{\lambda_i} dt    \right)^{1/2}\\&\le& \sqrt m  C_n r^{-n/2}  \left(   \int_0^{2r}  t^{n-1} \sum_{i=1}^{m}  \frac{ ({u'_i}(t))^2 }{\lambda_i} dt    \right)^{1/2}.
 \end{eqnarray*}
From this and the estimate (\ref{bound}), in the proof of Theorem \ref{radial},  we get 
\begin{eqnarray*} 
\sum_{i=1}^{m}   \frac{|u'_i(r)|}{\sqrt{\lambda_i}}  &\le&\sqrt m  C_n  r^{-n/2+\sqrt{n-1}+1}  \left(   \int_{1/2}^{1}  t^{n-1} \sum_{i=1}^{m}  \frac{ ({u'_i}(t))^2  }{\lambda_i} dt    \right)^{1/2} \\ &\le &   
 C_{n,m}  r^{-n/2+\sqrt{n-1}+1}   \sum_{i=1}^{m}\frac{1}{\sqrt{\lambda_i}} ||  \nabla u_i||_{L^2(B_1\setminus \overline{B_{1/2}})}.
 \end{eqnarray*}
 This finishes the proof of (\ref{u'}). To prove (\ref{u''}), define $v_i(r)=-nr^{1-1/n} u'_i(r^{1/n})$ for $r\in(0,1]$. It is easy to see that $v'
_i(r)=-\Delta u_i(r^{1/n})=\lambda_i   H_i(u(r^{1/n}))$. Therefore, $v_i$ is a nonnegative nondecreasing function.  Note also that  
\begin{equation*} 
v''_i(r)= \lambda_i \sum_{j=1}^{m} \partial_j H_i(u(r^{1/n})) u_j(r^{1/n}) \frac{r^{1/n-1}}{n} \le 0.
 \end{equation*}
Therefore, $v_i$ is a concave function. This implies that $0\le v'_i(r) \le \frac{v_i(r)}{r}$ for $r\in(0,1]$  that is 
\begin{equation*} 
0\le - u''_i(r^{1/n}) - (n-1)r^{-1/n} u'_i(r^{1/n}) \le -n r^{-1/n} u'_i(r^{1/n}).
 \end{equation*}
Simplifying the above we get 
\begin{equation*} 
r^{-1/n} u'_i(r^{1/n})\le  u''_i(r^{1/n})  \le  -(n-1) r^{-1/n} u'_i(r^{1/n})
 \end{equation*}
 that gives us $|u''_i(r)| \le (n-1)  \frac{|u'_i(r)|}{r}$ for any $r\in(0,1]$.  This finishes the proof of (\ref{u''}).

\end{proof}

\end{document}